\documentclass[a4paper,12pt]{amsart}
\usepackage[T1]{fontenc}    
\usepackage[utf8]{inputenc}
\usepackage{enumerate}
\usepackage{amssymb}
\usepackage{fullpage}
\usepackage[colorlinks=true, linkcolor=red, citecolor=blue]{hyperref}

\def\RR{\mathbb R}
\def\CC{\mathbb C}
\def\ZZ{\mathbb Z}
\def\TT{\mathbb T}

\def\AA{\mathcal A}
\def\BB{\mathcal B}
\def\HH{\mathcal H}
\def\GG{\mathcal G}

\newcommand{\set}[1]{\left\lbrace #1\right\rbrace}
\providecommand{\abs}[1]{\left\lvert#1\right\rvert}
\providecommand{\norm}[1]{\left\lVert#1\right\rVert}
\newcommand{\remove}[1]{ }
\newcommand{\qtq}[1]{\quad\text{#1}\quad}

\DeclareMathOperator{\dist}{dist}

\newtheorem{theorem}{Theorem}[section]
\newtheorem{proposition}[theorem]{Proposition}
\newtheorem{lemma}[theorem]{Lemma}
\newtheorem{corollary}[theorem]{Corollary}

\theoremstyle{definition}
\newtheorem*{definition}{Definition}

\theoremstyle{remark}
\newtheorem{remark}[theorem]{Remark}
\newtheorem{remarks}[theorem]{Remarks}\remove{}

\numberwithin{equation}{section}

\begin{document}
\title[Pointwise control]{Pointwise control of the linearized Gear--Grimshaw system}
\author[Capistrano--Filho]{Roberto A. Capistrano--Filho}
\address{Departamento de Matemática,
Universidade Federal de Pernambuco,
S/N Cidade Universitária,
50740-545, Recife (PE)
Brazil}
\email{capistranofilho@dmat.ufpe.br}
\author[Komornik]{Vilmos Komornik}
\address{College of Mathematics and Computational Science, Shenzhen University, Shenzhen 518060, People’s Republic of China and Département de mathématique,  Université de Strasbourg, 7 rue René Descartes, 67084 Strasbourg Cedex, France}
\email{komornik@math.unistra.fr}
\author[Pazoto]{Ademir F. Pazoto}
\address{Instituto de Matemática,
Universidade Federal do Rio de Janeiro,
C.P. 68530 - Cidade Universitária - Ilha do Fundão,
21945-970 Rio de Janeiro (RJ),
Brazil}
\email{ademir@im.ufrj.br}
\subjclass[2010]{Primary: 93B07, 35Q53  Secondary: 93B52, 93B05}
\keywords{Coupled KdV equation, Gear–Grimshaw system, pointwise observability, pointwise controllability, feedback stabilization, nonharmonic analysis}
\date{Version 2019-09-22}

\begin{abstract}\mbox{}
In this paper we consider the problem of controlling pointwise, by means of a time dependent Dirac measure supported by a given point, a coupled system of two Korteweg--de Vries equations on the unit circle. More precisely, by means of spectral analysis and Fourier expansion we prove,  under general assumptions on the physical parameters of the system, a pointwise observability inequality which leads to the pointwise controllability by using two control functions. 
In addition, with a uniqueness property proved for the linearized system without control, we are also able to show pointwise controllability when only one control function acts internally. 
In both cases we can find, under some assumptions on the coefficients of the system, the sharp time of the controllability.
\end{abstract}
\maketitle

\section{Introduction}\label{s1}

Wave phenomena occur in many branches of mathematical physics and due to the wide practical applications it has become one of the most important scientific research areas.
During the past several decades, many scientists developed mathematical models to explain the wave behavior. The Korteweg-de Vries equation (KdV)
\begin{equation*}
u_t + u_{xxx}+u u_x=0
\end{equation*}
was first proposed as a model for propagation of unidirectional, one-dimensional, small-amplitude long waves of water in a channel. 
A few of the many other applications include internal gravity waves in a stratified fluid, waves in a rotating atmosphere, ion-acoustic waves in a plasma, among others.
Starting in the latter half of the 1960s, the mathematical theory for such nonlinear dispersive wave equations came to the fore
as a major topic within nonlinear analysis. 
Since then physicists and mathematicians were led to derive sets of equations to describe the
dynamics of the waves in some specific physical regimes and much effort has been expended on various aspects of the initial and boundary value problems.
For instance, since the first coupled KdV system was proposed by Hirota and Satsuma \cite{HirSat1981,HirSat1982}, it has been studied amply  and some important coupled KdV models have been
derived. 
In particular, general coupled KdV models were applied in different fields, such as in shallow stratified liquid:
\begin{equation}\label{11}
\begin{cases}
u_t + u_{xxx} +  a_3v_{xxx} + uu_x + a_1vv_x + a_2(uv)_x =  0, \\
b_1v_t + rv_x + v_{xxx} + b_2a_3u_{xxx} +vv_x + b_2a_2uu_x + b_2a_1(uv)_x = 0,
\end{cases}
\end{equation}
where $u=u(x,t)$ and $v=v(x,t)$ are real-valued functions of the real variables $x$ and $t$, and
$a_1$, $a_2$, $a_3$, $b_1$, $b_2$ and $r$ are real constants with $b_1>0$ and $b_2>0$.
System \eqref{11} was proposed by Gear and Grimshaw \cite{GeaGri1984} as a model to describe strong interactions of two long internal gravity waves
in a stratified fluid, where the two waves are assumed to correspond to different modes of the linearized equations of motion. 
It has the structure
of a pair of KdV equations with linear coupling terms and has been object of intensive research in recent years. 
We refer to
\cite{BonPonSauTom1992} for an extensive discussion on the physical relevance of the system in its full structure.

\subsection{Setting of the problem} 
In this paper we are mainly concerned with the study of the pointwise controllability of the linearized Gear--Grimshaw system posed on the unit circle $\TT $:
\begin{equation}\label{12}
\begin{cases}
u_t+u_{xxx}+av_{xxx}=f(t)\delta_{x_0}\qtq{in}\RR\times\TT,\\
cv_t+rv_x+v_{xxx}+du_{xxx}=g(t)\delta_{x_0}\qtq{in}\RR\times\TT,\\
u(0)=u_0\qtq{and} v(0)=v_0\qtq{in}\TT,\\
\end{cases}
\end{equation}
where $a,c,d,r$ are given positive constants, $\delta_{x_0}$ denotes the Dirac delta function centered in a given point $x_0\in\TT$ and $f,g$ are the control functions.

More precisely, the purpose is to see whether one can force the solutions of those systems to have certain desired properties by choosing appropriate control inputs. 
Consideration will be given to the following fundamental problem that arises in control theory, as proposed by Haraux in \cite{Har1990}:

\medskip

\noindent\textbf{Pointwise control problem:} Given $x_0\in\TT$, $T > 0$ and  $(u_0,v_0)$, $(u_T,v_T)$ in $L^2(\TT)\times L^2(\TT)$, can we find appropriate $f(t)$ and $g(t)$ in a certain space such that the corresponding solution $(u,v)$ of \eqref{12} satisfies 
\begin{equation*}
u(T)=u_T \qtq{and}v(T)=v_T?
\end{equation*}

If we can always find control inputs to drive the system from any given initial state $(u_0,v_0)$ to any given terminal state $(u_T,v_T)$, then the system is said to be pointwise controllable.

\subsection{State of the art}  
As far as we know, the internal controllability problem for the system \eqref{11} remains open.
By contrast, the study of the boundary controllability properties is considerably more developed. 
Indeed, the first result
was obtained in \cite{micuortega2000}, when the model is posed on a periodic domain and $r = 0$. 
In this case, a diagonalization of the main terms allows us to decouple the corresponding linear system into two scalar KdV equations and to use the previous results available in the literature. 
Later on, Micu \emph{et al.} \cite{micuortegapazoto2009} proved the local exact boundary controllability property for the nonlinear system, posed on a bounded interval. 
Their result was improved by Cerpa and Pazoto \cite{cerpapazoto2011} and by Capistrano--Filho \emph{et al.} \cite{capisgallegopazoto2016}. 
By considering a different set of boundary conditions, the same boundary control problem was also addressed by the authors in  \cite{capisgallegopazoto2017}. 
We note that, the results mentioned above were first obtained for the corresponding linearized systems
by applying the Hilbert Uniqueness Method (HUM) due to J.-L. Lions \cite{Lions1988}, combined with some ideas introduced by Rosier in \cite{rosier1997}. 
In this case the problem is reduced to prove the so-called \emph{``observability inequality''} for the corresponding adjoint system. 
The controllability result was extended to the full system by means of a fixed point argument.

The internal stabilization problem has also been addressed; see, for instance, \cite{CaKoPa2014,Dav3,PazSou} and the references therein. 
Although controllability and stabilization  are closely related, one may expect that some of the available results will have some counterparts in the context of the control problem, but this issue is open. Particularly, when the model is posed on a periodic domain, Capistrano--Filho \emph{et al.} \cite{CaKoPa2014}, designed a time-varying feedback law and established the exponential stability of the solutions in Sobolev spaces of any positive integral order by using a Lyapunov approach. 
This extends an earlier theorem of Dávila \cite{Dav3} also obtained in $H^s(\TT)$, for $s\leq 2$.
The proof follows the ideas introduced in \cite{KomRusZha1991} for the scalar KdV equation by using the infinite family of conservation laws for this equation. 
These conservation laws lead to the construction of suitable Lyapunov functions that give the exponential decay of the solutions.
In \cite{CaKoPa2014} the Lyapunov approach was possible thanks to the results established by Dávila and Chavez \cite{DavCha2006}. 
They proved that, under suitable conditions on the coefficients of the system, the system also has an infinite family of conservation laws.

\subsection{Main results} 
As we mentioned before, no results are available in the literature on the internal controllability of the Gear-Grimshaw system. 
In this work we use spectral analysis and Fourier series to prove some results of pointwise controllability for the system \eqref{12}.

Fourier series are considered to be very useful in linear control theory (see, e.g. \cite{Rus1978} and its references). 
In particular, a classical generalization of Parseval’s equality, given by Ingham \cite{Ing1936}, and its many recent variants are very efficient in solving many control problems where other methods do not seem to apply. 
An outline of this theory is presented in \cite{BaiKomLor111,KomLor2005,Lor2005}.

We also prove some new results concerning the use of harmonic analysis in the framework of dispersive systems. 
In this spirit, we derive the controllability of the linearized Gear-Grimshaw system posed on the unit circle $\mathbb{T}$. 
As it was pointed out earlier by Haraux and Jaffard \cite{Har1990,HarJaf991,Har1994} by studying some other systems, for non-periodic  boundary conditions the controllability properties  may heavily depend on the location of the observation or control point.

One of the main result provides a sharp positive answer for the controllability issue mentioned in the beginning of this introduction.

\begin{theorem}\label{t11}
For almost all quadruples $(a,c,d,r)\in(0,\infty)^4$ the following property holds.
For any fixed
\begin{equation*}
x_0\in\TT,\quad T>0\qtq{and}
(u_0,v_0), (u_T,v_T)\in H:=L^2(\TT)\times L^2(\TT)
\end{equation*}
there exist control functions $f,g\in L^2_{\text{loc}}(\RR)$ such that the solution of \eqref{12} satisfies the final conditions
\begin{equation*}
u(T)=u_T\qtq{and} v(T)=v_T.
\end{equation*}
\end{theorem}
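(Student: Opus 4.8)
The strategy is the Hilbert Uniqueness Method. By duality and transposition, exact controllability of \eqref{13} in $H=L^2(\TT)\times L^2(\TT)$ with controls $f,g\in L^2_{\text{loc}}(\RR)$ is equivalent to a pointwise \emph{observability} inequality
\[
\norm{(\varphi_0,\psi_0)}_H^2 \ \lesssim\ \int_0^T \Bigl(\abs{\varphi(t,x_0)}^2+\abs{\psi(t,x_0)}^2\Bigr)\,dt
\]
for the adjoint system, where $(\varphi,\psi)$ is the adjoint state with data $(\varphi_0,\psi_0)$; the reverse (Bessel) inequality, needed to make the transposition solutions and the observation operator well defined, will fall out of the same analysis. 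The adjoint system is again a linearized Gear--Grimshaw system, and since the control enters \eqref{13} through $(f\delta_{x_0},g\delta_{x_0})$, the adjoint observation is exactly the pair of traces $\bigl(\varphi(t,x_0),\psi(t,x_0)\bigr)$.

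To analyze the adjoint flow I would diagonalize in $x$. In the Fourier basis $\{e^{inx}\}_{n\in\ZZ}$ of $L^2(\TT)$, the $n$-th mode of \eqref{13} without control satisfies $\frac{d}{dt}\binom{\hat u_n}{\hat v_n}=iA_n\binom{\hat u_n}{\hat v_n}$ with the real matrix
\[
A_n=n^3\begin{pmatrix}1&a\\ d/c&1/c\end{pmatrix}-n\begin{pmatrix}0&0\\0&r/c\end{pmatrix}=:n^3M+nN,
\]
and the adjoint system has the same structure with $A_n$ replaced by its transpose, hence the same eigenvalues. Since $M$ has strictly positive discriminant $(1-1/c)^2+4ad/c$, its eigenvalues $\mu^-<\mu^+$ are real and simple, with $\mu^+>0$ always; for all but finitely many $n$ the matrix $A_n$ then has two simple eigenvalues admitting the expansion $\lambda_n^{\pm}=\mu^{\pm}n^3+\nu^{\pm}n+o(1)$, with a uniformly well-conditioned eigenbasis $\{v_n^{\pm}\}$ normalized by $\norm{v_n^{\pm}}=1$. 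For $(a,c,d,r)$ outside an exceptional set of measure zero one checks in addition that $ad\neq1$ (so that $\mu^-\neq0$ too), that $A_n$ is diagonalizable for \emph{every} $n\in\ZZ$, and that the reals $\lambda_n^{\pm}$ are pairwise distinct; the zero mode is a trivial two-dimensional ODE control problem, handled separately. Expanding the adjoint data in these eigenbases turns the observation into a pair of nonharmonic Fourier series,
\[
\varphi(t,x_0)=\sum_{n,\pm}\alpha_n^{\pm}(v_n^{\pm})_1\,e^{inx_0}e^{i\lambda_n^{\pm}t},
\qquad
\psi(t,x_0)=\sum_{n,\pm}\alpha_n^{\pm}(v_n^{\pm})_2\,e^{inx_0}e^{i\lambda_n^{\pm}t},
\]
with $\sum_{n,\pm}\abs{\alpha_n^{\pm}}^2\asymp\norm{(\varphi_0,\psi_0)}_H^2$ by the conditioning of the eigenbases.

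The observability inequality then follows from an Ingham-type theorem in nonharmonic Fourier analysis, once the frequencies $\{\lambda_n^{\pm}\}$ are shown to satisfy a uniform gap condition. Within each branch the consecutive gap is $\sim3\mu^{\pm}n^2\to\infty$, and the two branches of a fixed mode are separated by $\sim(\mu^+-\mu^-)\abs{n}^3\to\infty$; the delicate point is to exclude clustering between $\lambda_n^{+}$ and $\lambda_m^{-}$ (and their sign-reversed counterparts, since $A_{-n}=-A_n$) for $n\neq m$. This is where ``for almost all $(a,c,d,r)$'' is used: by the asymptotic expansion, such clustering would force $\mu^+n^3+\nu^+n-\mu^-m^3-\nu^-m$ to stay bounded for infinitely many pairs $(n,m)$, an arithmetic resonance among the parameter-dependent reals $\mu^{\pm},\nu^{\pm}$; the set of quadruples for which this (or one of the finitely many degeneracies above) occurs has Lebesgue measure zero, in the same spirit as the classical fact that almost every real number is not a Liouville number. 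Outside this null set the consecutive gaps tend to $+\infty$, so the generalized Ingham inequality holds \emph{for every $T>0$}; applying it separately to the two series and adding, while using $\abs{(v_n^{\pm})_1}^2+\abs{(v_n^{\pm})_2}^2=1$, yields
\[
\int_0^T\Bigl(\abs{\varphi(t,x_0)}^2+\abs{\psi(t,x_0)}^2\Bigr)dt \ \asymp\ \sum_{n,\pm}\abs{\alpha_n^{\pm}}^2 \ \asymp\ \norm{(\varphi_0,\psi_0)}_H^2,
\]
both the observability and the Bessel bounds at once. By HUM this produces, for arbitrary $T>0$, $x_0\in\TT$ and data in $H$, the desired controls $f,g\in L^2_{\text{loc}}(\RR)$.

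I expect the gap analysis to be the main obstacle: making the eigenvalue expansion $\lambda_n^{\pm}=\mu^{\pm}n^3+\nu^{\pm}n+o(1)$ precise enough to control all pairwise differences, and carrying out the measure-theoretic argument that rules out resonances between the two branches for almost every $(a,c,d,r)$. The reduction to observability and the invocation of Ingham's theorem are, by comparison, routine — the former resting on the Bessel bound, which is itself a by-product of the gaps tending to infinity.
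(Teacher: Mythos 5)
Your overall architecture (HUM, Fourier diagonalization in $x$, eigenvalue asymptotics $\lambda_n^{\pm}=\mu^{\pm}n^3+\nu^{\pm}n+o(1)$ with $\mu^->0$, $=0$, $<0$ according to $ad<1$, $=1$, $>1$) matches the paper, and your computations of the mode matrices and of the degeneracy $ad=1$ are correct. But there is a genuine gap at exactly the point you flag as ``the main obstacle.'' You apply the \emph{scalar} Ingham--Beurling inequality to the single nonharmonic series $\varphi(t,x_0)=\sum_{n,\pm}\alpha_n^{\pm}(v_n^{\pm})_1e^{inx_0}e^{i\lambda_n^{\pm}t}$ over the \emph{merged} family $\{\lambda_n^{\pm}\}$, so you need the merged family to be uniformly separated with $\gamma_\infty=\infty$; this forces you to exclude, for almost every quadruple, all but finitely many near-coincidences $\abs{\lambda_n^{+}-\lambda_m^{-}}<R$ for every $R$. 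That is a metric Diophantine statement about the cube-root ratio $(\mu^+/\mu^-)^{1/3}$ (a Borel--Cantelli argument on $\abs{\theta n-m}\lesssim R/n^2$, pushed forward through the parameter map, with the linear corrections $\nu^{\pm}$ and the $o(1)$ terms controlled, and with the sign-reversed branches when $\mu^-<0$ treated as well). You assert it ``in the same spirit as'' the Liouville-number fact but do not prove it, and it is of a different and substantially harder nature than the exceptional sets you do identify (diagonalizability failures and exact coincidences, which are countably many algebraic conditions). As written, the observability inequality — and hence the theorem — is not established.

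The paper avoids this difficulty entirely, and the contrast is instructive. Because the observation is the \emph{pair} $(\varphi(t,x_0),\psi(t,x_0))$, one can keep the two branches as separate families and apply a \emph{vectorial} Ingham theorem (Theorem \ref{t24} together with Theorem \ref{t25} for the finitely many low modes): each branch separately has consecutive gaps tending to infinity (Lemmas \ref{l34} and \ref{l35}), and the eigenvectors $Z_k^{\pm}$ converge to \emph{linearly independent} limits $Z^{\pm}$, which controls the cross terms between the two branches without any hypothesis on the distances $\abs{\omega_k^+-\omega_n^-}$. Consequently the only almost-everywhere condition needed is within-branch injectivity ($\omega_k^{\pm}\ne\omega_n^{\pm}$ for $k\ne n$, Lemma \ref{l37}), which reduces to a cubic polynomial in $r$ having finitely many roots — no Diophantine analysis at all. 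If you want to salvage your scalar route you must either carry out the Borel--Cantelli argument in full, or switch to the vectorial estimate; note that the two-component structure of the observation is not a convenience but the mechanism that makes the cross-branch clustering harmless. (Cross-branch separation \emph{is} needed in the paper, but only for the one-observation results of Section \ref{s6}, and there only the exact equalities $\omega_k^+\ne\omega_n^-$ plus a weakened two-term gap condition, not a quantitative separation.)
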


We prove this theorem by applying the Hilbert Uniqueness Method (HUM) due to J.-L. Lions \cite{Lions1988} (see also Dolecki and Russell \cite{DolRus1977}) that reduces the controllability property to the observability of the homogeneous dual problem
\begin{equation}\label{13}
\begin{cases}
u_t+u_{xxx}+av_{xxx}=0
\qtq{in}\RR\times\TT,\\
cv_t+rv_x+v_{xxx}+du_{xxx}=0\qtq{in}\RR\times\TT,\\
u(0)=u_0\qtq{and} v(0)=v_0\qtq{in}\TT.
\end{cases}
\end{equation}
More precisely, Theorem \ref{t11} will be obtained as a consequence of

\begin{theorem}\label{t12}
For almost all quadruples $(a,c,d,r)\in(0,\infty)^4$ the following properties hold.

\begin{enumerate}[\upshape (i)]
\item Given any $(u_0,v_0)\in H$, the system \eqref{13} has a unique solution $(u,v)\in C_b(\RR,H)$, and the linear map
\begin{equation*}
(u_0,v_0)\mapsto (u,v)
\end{equation*}
is continuous from $H$ into $C_b(\RR,H)$.

\item The \emph{energy} of the solution, defined by the formula
\begin{equation*}
E(t):=\norm{(u,v)(t)}_H^2 
=\int_{\TT}\abs{u(t,x)}^2+\frac{ac}{d}\abs{v(t,x)}^2\ dx,
\end{equation*}
does not depend on $t\in\RR$.

\item For every solution and $x_0\in\TT$ the functions $u(\cdot,x_0)$ and $v(\cdot,x_0)$ are well defined in $L^2_{loc}(\RR)$.
\item For every non-degenerate bounded interval $I$ there exist two positive constants $\alpha, \beta$  such that
\begin{equation*}
\alpha E\le
\int_I\abs{u(t,x_0)}^2+\abs{v(t,x_0)}^2\ dt
\le \beta E
\end{equation*}
for all solutions of \eqref{13} and for all $x_0\in\TT$.
\end{enumerate}
\end{theorem}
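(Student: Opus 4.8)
The plan is to diagonalize in $x$ by Fourier series and reduce the whole statement to a nonharmonic Fourier series estimate in $t$. Writing $u(t,x)=\sum_{n\in\ZZ}u_n(t)e^{inx}$, $v(t,x)=\sum_{n\in\ZZ}v_n(t)e^{inx}$, the system \eqref{14} decouples, mode by mode, into $\dot U_n=inM_nU_n$ with $U_n:=(u_n,v_n)$ and $M_n:=\left(\begin{smallmatrix}n^2 & an^2\\ dn^2/c & (n^2-r)/c\end{smallmatrix}\right)$. The first key observation is that $M_n$ is self-adjoint for the inner product $\vv{(x_1,y_1),(x_2,y_2)}_w=x_1\bar x_2+\frac{ac}{d}y_1\bar y_2$ on $\CC^2$ — this is exactly what singles out the weight $ac/d$ in the energy. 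Hence $inM_n$ is skew-adjoint for $\vv{\cdot,\cdot}_w$, so $A=\bigoplus_n inM_n$ generates a unitary $C_0$-group on $H$ equipped with the equivalent norm $\norm{(u,v)}_H^2=\int_\TT\abs u^2+\frac{ac}{d}\abs v^2$; this yields (i) and (ii) at once, and in fact for every $(a,c,d,r)\in(0,\infty)^4$. A short discriminant computation shows moreover that $M_n$ always has two distinct real eigenvalues $\mu_n^-<\mu_n^+$, with $\vv{\cdot,\cdot}_w$-orthonormal eigenvectors $e_n^\pm$, so that every solution is the Dirichlet series
\[
(u,v)(t,x)=\sum_{n,\pm}a_n^\pm\,e_n^\pm\,e^{i(\omega_n^\pm t+nx)},\qquad \omega_n^\pm:=n\mu_n^\pm,\qquad \sum_{n,\pm}\abs{a_n^\pm}^2=E.
\]

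Next I would read off the asymptotics of the eigendata. Expanding the square root of the discriminant gives $\omega_n^\pm=\lambda^\pm n^3+\beta^\pm n+O(n^{-1})$ with $\lambda^\pm=\frac{(c+1)\pm\sqrt{(c-1)^2+4acd}}{2c}$, so that $\lambda^+>0$, $\lambda^+>\lambda^-$, the gaps $\omega_{n+1}^\pm-\omega_n^\pm\asymp n^2$ inside each branch tend to $+\infty$, and $\sum_n\abs{\omega_n^\pm}^{-1}<\infty$; furthermore $e_n^\pm$ converges to an explicit vector with both coordinates nonzero, for $n\ne0$ both coordinates of $e_n^\pm$ are nonzero, and for $n=0$ the two eigenvectors are proportional to $(1,0)$ and $(0,1)$, both attached to the frequency $0$. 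In particular $\abs{(e_n^\pm)_1}^2+\abs{(e_n^\pm)_2}^2\ge c_0>0$ uniformly. Setting $x=x_0$, the $\CC^2$-valued map $t\mapsto(u(t,x_0),v(t,x_0))$ equals $\sum_{n,\pm}a_n^\pm\,(e^{inx_0}e_n^\pm)\,e^{i\omega_n^\pm t}$ with $\vv{\cdot,\cdot}_w$-unit coefficient vectors. Parts (iii)--(iv) then follow from a vector-valued Ingham-type theorem (see e.g. \cite{KomLor2005,BaiKomLor111}): once we know that the frequency family $\Lambda=\set{\omega_n^\pm:n\in\ZZ}$ is made of pairwise distinct numbers apart from the double frequency $0$ (whose two attached vectors are linearly independent), and is separated (its gaps bounded below, indeed tending to $+\infty$), then, because of the cubic growth $\omega_n^\pm\asymp n^3$, one gets for every bounded interval $I$ of positive length inequalities $\alpha\sum_k\abs{c_k}^2\le\int_I\norm{\sum_k c_kv_ke^{i\omega_kt}}_w^2\,dt\le\beta\sum_k\abs{c_k}^2$, with $\alpha,\beta>0$ depending only on $I$ and $\Lambda$. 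The upper bound gives (iii) and the very meaning of $u(\cdot,x_0),v(\cdot,x_0)$ in $L^2_{\mathrm{loc}}$, while the two-sided bound, combined with $\norm{e^{inx_0}e_n^\pm}_w=1$ and the equivalence of $\norm\cdot_w$ with the Euclidean norm on $\CC^2$, gives (iv) with constants independent of $x_0$. Observing both $u$ and $v$ is essential here: the modes $u_0,v_0$ are constant in $t$, and $u(\cdot,x_0)$ alone would never detect the $v_0$-contribution to $E$.

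The remaining, and main, point is to verify the hypotheses on $\Lambda$ for almost every $(a,c,d,r)$. Inside each branch the gaps are $\asymp n^2$, so the only threat is a persistent near-collision between the two cubic branches, i.e. $\abs{\omega_n^+-\omega_m^-}$ (or $\abs{\omega_n^++\omega_m^-}$ when $\lambda^-<0$) staying bounded along infinitely many pairs. Feeding in $\omega_n^+=\lambda^+n^3+\beta^+n+O(n^{-1})$ and the analogue for $\omega_m^-$, such a collision forces $m$ to be the nearest integer to $\theta n$, where $\theta:=(\lambda^+/\abs{\lambda^-})^{1/3}$ is well defined and $\ne1$ once $ad=1$ is excluded; a short computation then shows $\abs{\omega_n^+-\omega_m^-}$ can stay bounded only if $\theta n$ lies within $O(n^{-2})$ of a point prescribed by $n$, that is, only if $\theta$ belongs to a $\limsup$-set covered, for each $n$, by $O(1)$ intervals of length $O(n^{-3})$; since $\sum n^{-3}<\infty$, the Borel--Cantelli lemma makes this set of $\theta$ Lebesgue-null, and outside it the gaps of $\Lambda$ tend to $+\infty$. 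Finally $\theta$ is a real-analytic, strictly monotone function of $a$ (for fixed $c,d$, off the surface $ad=1$), hence maps null sets to null sets; adjoining the countably many proper real-analytic conditions one must avoid anyway ($ad=1$; $\omega_n^\pm=0$ with $n\ne0$; $\omega_n^\pm=\omega_m^{\pm'}$ for distinct indices), and applying Fubini in the remaining variables, one sees that the full exceptional set is Lebesgue-null. The finitely many surviving close pairs, and the double frequency at $0$, are then absorbed into the constants by a finite-dimensional perturbation argument. The hard part — essentially the only place where real difficulty lies — is precisely this Diophantine control of the two cubic spectral branches together with the measure-zero bookkeeping; everything else is a careful but routine deployment of the spectral and nonharmonic-analysis machinery.
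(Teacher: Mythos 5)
Your reduction to a nonharmonic Fourier series in $t$, the weighted inner product with weight $ac/d$ that makes each Fourier block self-adjoint, the asymptotics $\omega_n^{\pm}=\lambda^{\pm}n^3+O(n)$ with within-branch gaps of order $n^2$, and the convergence of the eigenvectors to nonzero limits all coincide with the paper's Section \ref{s3} (Proposition \ref{p31}, Lemma \ref{l32}, Lemmas \ref{l34}--\ref{l35}); your treatment of (i)--(iii) and of the direct inequality is sound.

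The gap is in the inverse inequality. You insist on making the \emph{union} $\Lambda=\set{\omega_n^{\pm}}$ a uniformly separated family, which forces you to exclude persistent near-collisions $\omega_n^{+}\approx\omega_m^{-}$ between the two cubic branches, and you yourself identify this Diophantine step as ``the hard part''. As sketched it does not close: the point that $\theta n$ must avoid depends on the lower-order coefficients $\beta^{\pm}$, which vary with the same parameters as $\theta$, so the bad set is not a fixed limsup-set in $\theta$ alone; the monotonicity remark is stated in the wrong direction (one needs preimages of null sets to be null); and the claim that the conditions $\omega_n^{\pm}=\omega_m^{\pm'}$ are ``proper'' real-analytic conditions is precisely what must be verified --- the paper does so by exhibiting the cubic \eqref{315} in $r$ with leading coefficient $2c(n-k)(n^3-k^3)\ne 0$. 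More importantly, the cross-branch separation is unnecessary for this theorem, because \emph{both} components are observed: the eigenvectors $Z_k^{\pm}$ converge to two orthogonal limits $Z^{\pm}$, so the vectorial Ingham theorem (Theorem \ref{t24}) applies with $J=2$ and requires only that each branch separately have a uniform gap for $\abs{k}>K$, the perturbations $Z_k^{\pm}-Z^{\pm}$ being absorbed since $\delta_K\to 0$. The only arithmetic input is then the within-branch distinctness \eqref{310}, established for almost every quadruple in Lemma \ref{l37}, and the finitely many low modes are restored by the Haraux-type Theorem \ref{t25}. Cross-branch conditions of the type \eqref{63} are needed only in Section \ref{s6}, where a single component is observed, and even there only as exact inequalities handled by divided differences, not as a quantitative gap. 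Replacing your Diophantine lemma by this vectorial argument closes the proof.
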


By applying a general method \cite{Kom83}, analogous to HUM, Theorem \ref{t12} will also imply the pointwise exponential stabilizability of \eqref{12}:

\begin{theorem}\label{t13}
For almost all quadruples $(a,c,d,r)\in(0,\infty)^4$ the following property holds.
For any fixed $x_0\in\TT$ and $\omega>0$ there exist two continuous linear functionals
\begin{equation*}
F:H\to \RR\qtq{and}G:H\to \RR
\end{equation*}
such that the following properties hold.
\begin{enumerate}[\upshape (i)]
\item Given any $(u_0,v_0)\in H$, the system
\begin{equation*}
\begin{cases}
u_t+u_{xxx}+av_{xxx}=F(u,v)\delta_{x_0}
\qtq{in}\RR\times\TT,\\
cv_t+rv_x+v_{xxx}+du_{xxx}=G(u,v)\delta_{x_0}
\qtq{in}\RR\times\TT,\\
u(0)=u_0\qtq{and} v(0)=v_0\qtq{in}\TT
\end{cases}
\end{equation*}
has a unique solution $(u,v)\in C_b(\RR,H)$, and the linear map
\begin{equation*}
(u_0,v_0)\mapsto (u,v)
\end{equation*}
is continuous from $H$ into $C_b(\RR,H)$.
\item There exists a constant $M>0$ such that
\begin{equation*}
\norm{(u(t),v(t))}_H\le Me^{-\omega t}\norm{(u_0,v_0)}_H
\end{equation*}
for all solutions and for all $t\ge 0$.
\end{enumerate}
\end{theorem}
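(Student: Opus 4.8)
The plan is to derive Theorem \ref{t13} from the observability estimates of Theorem \ref{t12} together with an abstract feedback-stabilization principle, analogous to HUM, following the general method of \cite{Kom83}.

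\emph{Reformulation.} First I would recast \eqref{14} in semigroup form: writing $z=(u,v)$ and equipping $H$ with the weighted inner product $\vv{(u_1,v_1),(u_2,v_2)}_H=\int_\TT u_1\overline{u_2}+\frac{ac}{d}\,v_1\overline{v_2}\,dx$ of Theorem \ref{t12}(ii), parts (i)--(ii) say that \eqref{14} is governed by a unitary $C_0$-group $e^{t\mathcal A}$ on $H$ with $\mathcal A$ skew-adjoint and (from the Fourier/spectral description underlying Theorem \ref{t12}) of compact resolvent. The pointwise control in \eqref{13} is then the operator $B\colon\RR^2\to D(\mathcal A^*)'$, $B(p,q)=(p\,\delta_{x_0},q\,\delta_{x_0})$, whose adjoint with respect to $\vv{\cdot,\cdot}_H$ is, up to positive constants, the point-trace operator $Cz=(u(x_0),v(x_0))$ of Theorem \ref{t12}. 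In this language Theorem \ref{t12}(iii) says exactly that $C$ is an admissible observation operator for $e^{t\mathcal A}$, while Theorem \ref{t12}(iv) says that $(\mathcal A,C)$ is exactly observable on every non-degenerate bounded interval, with constants independent of $x_0$; dually, $(\mathcal A,B)$ is exactly controllable in every time $T>0$ (this is Theorem \ref{t11}).

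\emph{The feedback.} Next, fixing $\omega>0$, I would invoke the abstract result of \cite{Kom83}: exact controllability of a skew-adjoint group through an admissible operator $B$ in some time produces, for the prescribed rate $\omega$, a bounded linear map $K\colon H\to\RR^2$ such that $\mathcal A+BK$ generates a $C_0$-semigroup $S(\cdot)$ with $\norm{S(t)}_{\mathcal L(H)}\le Me^{-\omega t}$ for $t\ge0$. Concretely, $K$ is built from a weighted controllability Gramian $\Lambda_\omega$ — obtained by minimising $\int e^{2\omega t}(\abs{f}^2+\abs{g}^2)$ among controls steering the state to rest — via $K=-B^*\Lambda_\omega^{-1}$ up to a normalising constant; the bounded invertibility of $\Lambda_\omega$ is precisely the observability inequality of Theorem \ref{t12}(iv), and the fact that that inequality is valid on arbitrarily short intervals is what allows $\omega$ to be arbitrary. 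Taking $F$ and $G$ to be the two scalar components of $K$ yields the required continuous linear functionals.

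\emph{Well-posedness and decay.} Finally, since $B$ is admissible and $K$ is bounded from $H$ into the finite-dimensional space $\RR^2$, the perturbed operator $\mathcal A+BK$ still generates a $C_0$-semigroup on $H$, so the closed-loop system \eqref{13} with $f=F(u,v)$ and $g=G(u,v)$ has a unique solution $(u,v)$ depending continuously on $(u_0,v_0)$, which is (i); the estimate (ii) is then immediate from $\norm{S(t)}_{\mathcal L(H)}\le Me^{-\omega t}$. I expect the main obstacle to be the interface between the trace inequalities of Theorem \ref{t12}, stated in $L^2_{\mathrm{loc}}(\RR)$, and the unbounded, non-variational control operator $B=C^*$ built from Dirac masses: one must check the admissibility hypotheses carefully enough that the closed-loop problem is genuinely well posed in $H$ and that the feedback gain really lands in $\RR^2$ — two scalar functionals — rather than in some larger trace space. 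Because $\TT$ is one-dimensional and $C$ is a genuine point evaluation, this is the same kind of subtlety already resolved when proving Theorem \ref{t12}(iii)--(iv), so it should go through once those are in hand; but it is the step where the $\delta_{x_0}$-nature of the control truly matters.
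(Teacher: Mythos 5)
Your proposal is correct and follows essentially the same route as the paper: Section \ref{s5} reduces Theorem \ref{t13} to the abstract rapid-stabilization theorem of \cite{Kom83} (restated there as Theorem \ref{t52}), whose hypotheses --- skew-adjoint generator, admissibility and exact observability of the point-trace operator --- are exactly the items of Theorem \ref{t12} established in Section \ref{s3}. The only difference is cosmetic: you spell out the weighted-Gramian construction of the feedback that the paper leaves inside the cited abstract result (and, as a minor aside, the arbitrariness of $\omega$ there comes from the exponential weight in the Gramian, not from observability holding on arbitrarily short intervals).
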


Another relevant result of this work is a uniqueness theorem when only one function, $u(\cdot,x_0)$ or  $v(\cdot,x_0)$, is observed.

\begin{theorem}\label{t14}
For almost all quadruples $(a,c,d,r)\in(0,\infty)^4$ the following property holds.

Fix $x_0\in\TT$ and a non-degenerate interval $I$ arbitrarily, and consider a solution of \eqref{13}.

\begin{enumerate}[\upshape (i)]
\item If $u(t,x_0)=0$ for all $t\in I$, then  $u=0$ and $v$ is an arbitrary constant function.
\item If $v(t,x_0)=0$ for all $t\in I$, then  $v=0$ and $u$ is an arbitrary constant function.
\end{enumerate}
\end{theorem}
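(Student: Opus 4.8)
The approach is to diagonalize \eqref{14} in the space variable and to reduce the statement to a minimality (freeness) property of a nonharmonic exponential family in the time variable, in the spirit of the spectral analysis already used for Theorem~\ref{t12}. I expand a solution of \eqref{14} as $u(t,x)=\sum_{n\in\ZZ}u_n(t)e^{inx}$ and $v(t,x)=\sum_{n\in\ZZ}v_n(t)e^{inx}$; then each pair $(u_n,v_n)$ solves the autonomous system $\frac{d}{dt}\binom{u_n}{v_n}=inB_n\binom{u_n}{v_n}$, where $B_n$ has first row $(n^2,\,an^2)$ and second row $(\frac dc n^2,\,\frac1c n^2-\frac rc)$. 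For $n=0$ this forces $u_0$ and $v_0$ to be constant in $t$. For $n\ne0$ one checks directly that $B_n$ is self-adjoint for the scalar product $\vv{(p,q),(p',q')}=p\overline{p'}+\frac{ac}{d}q\overline{q'}$ that underlies the conserved energy of Theorem~\ref{t12}(ii); since its off-diagonal entry $an^2$ is nonzero, $B_n$ is not scalar, hence it has two \emph{distinct real} eigenvalues $\lambda_n^\pm$, with eigenvectors $(1,\kappa_n^\pm)$ where $\kappa_n^\pm=\frac{\lambda_n^\pm-n^2}{an^2}$. The decisive algebraic fact is $\kappa_n^\pm\ne0$: indeed $n^2$ is never an eigenvalue of $B_n$, since $\det(B_n-n^2I)=-\frac{ad}{c}n^4\ne0$ for $n\ne0$. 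Setting $\omega_n^\pm:=n\lambda_n^\pm$ and projecting the Cauchy data on the eigenbasis, one gets scalars $c_n^\pm$ such that $u_n(t)=c_n^+e^{i\omega_n^+t}+c_n^-e^{i\omega_n^-t}$ and $v_n(t)=\kappa_n^+c_n^+e^{i\omega_n^+t}+\kappa_n^-c_n^-e^{i\omega_n^-t}$, so that for a fixed $x_0$
\[
u(t,x_0)=u_0+\sum_{n\ne0}\bigl(c_n^+e^{inx_0}\bigr)e^{i\omega_n^+t}+\bigl(c_n^-e^{inx_0}\bigr)e^{i\omega_n^-t},
\]
with the analogous expansion for $v(t,x_0)$ carrying the extra factors $\kappa_n^\pm$; both series converge in $L^2_{\mathrm{loc}}(\RR)$ by Theorem~\ref{t12}(iii).

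Next I would invoke the spectral input already needed for Theorem~\ref{t12}: for almost every $(a,c,d,r)\in(0,\infty)^4$ the countable set
\[
\Lambda:=\{0\}\cup\{\pm\omega_n^+:n\ge1\}\cup\{\pm\omega_n^-:n\ge1\}
\]
consists of pairwise distinct real numbers whose consecutive gaps tend to $+\infty$, because $\omega_n^\pm\sim\rho^\pm n^3$ with $\rho^+\ne\rho^-$. (The exceptional parameter set is, as usual, a countable union of zero sets of nontrivial real-analytic functions of $(a,c,d,r)$ encoding the finitely many possible resonances $\omega_n^\pm=\pm\omega_m^\pm$ and $\omega_n^\pm=0$.) For a family of exponentials with such a gap condition, the system $\{e^{i\mu t}:\mu\in\Lambda\}$ is minimal in $L^2(I)$ for \emph{every} non-degenerate interval $I$ --- the generalized Ingham--Haraux lemma in the case of infinitely increasing gaps, see \cite{Ing1936,Har1990,BaiKomLor111,KomLor2005}. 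Hence any $\ell^2$-combination $\sum_{\mu\in\Lambda}a_\mu e^{i\mu t}$ that vanishes in $L^2(I)$ has all $a_\mu=0$.

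For (i), applying this to $u(t,x_0)=0$ on $I$ yields $u_0=0$ and $c_n^\pm e^{inx_0}=0$, hence $c_n^\pm=0$, for every $n\ne0$; therefore $u_n\equiv0$ for all $n$, i.e.\ $u\equiv0$, while $v(t,x)\equiv v_0$ is an unconstrained constant, and conversely every pair $(0,\text{const})$ solves \eqref{14}. Part (ii) follows the same computation: $v(t,x_0)=0$ on $I$ gives $v_0=0$ and $\kappa_n^\pm c_n^\pm e^{inx_0}=0$ for every $n\ne0$, and since $\kappa_n^\pm\ne0$ this again forces $c_n^\pm=0$, so $v\equiv0$ and $u\equiv u_0$ is an arbitrary constant.

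The main obstacle is the spectral/Diophantine step: showing that $\Lambda$ has no resonances and obeys the gap condition for almost all $(a,c,d,r)$, since a single coincidence among the $\omega_n^\pm$ would let a nontrivial exponential sum vanish on $I$ and would destroy the implication ``$c_n^\pm e^{inx_0}=0\Rightarrow c_n^\pm=0$''. This is exactly the genericity analysis already carried out for Theorem~\ref{t12} and would be quoted from there. By contrast, the feature that makes the theorem hold even when only $v(\cdot,x_0)$ (rather than $u(\cdot,x_0)$) is observed is the elementary relation $\kappa_n^\pm\ne0$, valid for \emph{all} admissible parameters since $d>0$.
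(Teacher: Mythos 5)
Your overall strategy (Fourier decomposition in $x$, reduction to a uniqueness property of the nonharmonic family $\{e^{i\omega_n^{\pm}t}\}$, genericity of non-resonance in $(a,c,d,r)$, and the observation $\kappa_n^{\pm}\ne 0$ to transfer the conclusion from the $u$-observation to the $v$-observation) is the same as the paper's, and your algebra --- the form of $B_n$, its self-adjointness for the weighted inner product, and $\det(B_n-n^2I)\ne0$ --- is correct. There is, however, a genuine gap at the decisive analytic step. You justify minimality of $\{e^{i\mu t}:\mu\in\Lambda\}$ on an arbitrary non-degenerate interval by asserting that the \emph{combined} spectrum $\Lambda=\{0\}\cup\{\omega_n^+\}\cup\{\omega_n^-\}$ has consecutive gaps tending to $+\infty$. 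That is true for each branch separately (Lemmas \ref{l34} and \ref{l35}, when $ad\ne1$), but not for their union: since $\omega_n^+\sim\rho^+n^3$ and $\omega_m^-\sim\rho^-m^3$ with distinct leading coefficients, each gap of length $\asymp n^2$ between consecutive $\omega^+$'s contains a bounded but nonzero number of $\omega^-$'s whose positions inside that gap vary with $n$; nothing prevents them from approaching the $\omega^+$'s arbitrarily closely, and the paper explicitly notes that $\omega_k^+$ may be close to $\omega_n^-$ for many couples $(k,n)$, so $\Lambda$ need not have any uniform gap at all, let alone gaps tending to infinity. The genericity argument you quote excludes only \emph{exact} coincidences $\omega_k^+=\omega_n^-$; it provides no lower bound on $\abs{\omega_k^+-\omega_n^-}$. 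Hence the ``Ingham--Haraux lemma for infinitely increasing gaps'' cannot be applied to $\Lambda$ as you invoke it.

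The paper repairs precisely this point: it verifies that $\Lambda$ satisfies the \emph{weakened} gap condition \eqref{22} with $M=2$ (Lemma \ref{l61}(iii): for $\varepsilon$ small enough every $\varepsilon$-chain has at most two elements, necessarily one from each branch), rewrites the close pairs using divided differences as in \eqref{64}--\eqref{66}, and applies Theorem \ref{t22}; the admissible interval length is then governed by the upper density, which vanishes by subadditivity, $D^+(\Lambda)\le D^+(\set{\omega_k^+})+D^+(\set{\omega_k^-})=0$ when $ad\ne1$. This gives the two-sided estimates of Theorem \ref{t62} on every non-degenerate interval, from which the uniqueness statement (Corollary \ref{c63}) follows exactly as in your final paragraph. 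If you replace your gap claim by the $M=2$ weakened gap condition together with the upper-density computation (for the uniqueness statement alone, minimality of a family of upper density zero suffices), your argument goes through. Note also that your asymptotics implicitly assume $ad\ne1$: for $ad=1$ the branch $\omega_n^-$ grows only linearly and the conclusion holds only for $\abs{I}>2\pi c(c+1)/r$; this is harmless for the ``almost all'' formulation but is why the paper tracks that case separately.
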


\begin{remarks}\mbox{}
\begin{enumerate}[\upshape (i)]
\item Theorems \ref{t11}, \ref{t12},  \ref{t13} and \ref{t14} will be proved in sharper forms, by making the assumptions of $a,c,d,r$ more explicit, and considering also some 
cases where the results hold only under a sharp condition $\abs{I}>T_0$ or $T>T_0$ with some explicitly given $T_0$, where $\abs{I}$ denotes the length of the interval $I$.
    
It is known that Fourier series and, in particular, Ingham type inequalities, are very efficient in solving many control problems when the frequencies satisfy 
a uniform gap condition. 
However, in some control problems only some weakened gap conditions are satisfied; see, for instance, \cite{BaiKomLor111,CaZu1996,JaTuZu1997}. 
Under such assumptions we may still get some {\em weakened observability theorems}.
Theorem \ref{t14} is obtained by applying a general result of this type.
Similarly to Theorem \ref{t12}, Theorem \ref{t14} implies some exact controllability and exponential stabilizability results by acting in only one of the equations.

\item The above results remain valid for the scalar KdV equation.
We will not present the proofs because they are similar (and simpler) than the proofs given in this paper.

\item If we require more regularity on the initial data, say $(u_0,v_0)\in H^2(\TT)\times H^2(\TT)$, then the results obtained for the linear system allow us to prove the local controllability of the nonlinear system by means of a fixed point argument.
The proof is similar to that of \cite[Theorem 2.2]{capistranogallego2018},
and hence it will be omitted.
\end{enumerate}
\end{remarks}

The plan of the present article is the following.
In Section \ref{s2} we present some known and new vectorial Ingham type theorems which will form the basis of the proofs of our  observability and uniqueness theorems. Then Theorems \ref{t12}, \ref{t11}, \ref{t13} and \ref{t14} will be proved (in strengthened forms) in Sections \ref{s3}, \ref{s4}, \ref{s5} and \ref{s6}, respectively.

\section{A review of Ingham type theorems}\label{s2}

Given a family $\Omega=(\omega_k)_{k\in K}:=\set{\omega_k\ :\ k\in K}$ of real numbers, we consider functions of the form
\begin{equation}\label{21}
\sum_{k\in K}c_ke^{i\omega_kt}
\end{equation}
with square summable complex coefficients $(c_k)_{k\in K}:=\set{c_k\ :\ k\in K}$, and we investigate the relationship between the quantities
\begin{equation*}
\int_I\abs{\sum_{k\in K}c_ke^{i\omega_kt}}^2\ dt
\qtq{and}
\sum_{k\in K}\abs{c_k}^2
\end{equation*}
where $I$ is some given bounded interval.

We recall the definition of the \emph{upper density} $D^+=D^+(\Omega)$ of $\Omega$.
For each $\ell>0$ we denote by $n^+(\ell)$ the largest number of exponents $\omega_k$ that we may find in an interval of length $\ell$, and then we set
\begin{equation*}
D^+:=\inf_{\ell>0}\frac{n^+(\ell)}{\ell}\in[0,\infty].
\end{equation*}
It can be shown (see, e.g., \cite[p. 57]{BaiKomLor111} or \cite[p. 174]{KomLor2005}) that
\begin{equation*}
D^+=\lim_{\ell\to\infty}\frac{n^+(\ell)}{\ell}.
\end{equation*}
It follows from the definition that $D^+$ is subadditive:
\begin{equation}\label{22}
D^+(\Omega_1\cup\Omega_2)\le D^+(\Omega_1)+D^+(\Omega_2)
\end{equation}
for any families $\Omega_1$ and $\Omega_2$.
If $\Omega$ is \emph{uniformly separated}, i.e., if 
\begin{equation*}
\gamma=\gamma(\Omega)=\inf\set{\abs{\omega_k-\omega_n}\ :\ k\ne n}>0,
\end{equation*}
then $D^+\le 1/\gamma$, and hence $D^+<\infty$.

Henceforth, unless stated otherwise, by a \emph{complex family}  we mean a  \emph{square summable family of complex numbers}.

First we recall a classical theorem of Ingham and Beurling \cite{Ing1936,Beu}.

\begin{theorem}\label{t21}
Assume that the family $\Omega=(\omega_k)_{k\in K}$ is uniformly separated.

\begin{enumerate}[\upshape (i)]
\item The sum \eqref{21} is a well defined function in $L^2_{\text{loc}}(\RR)$ for all complex families $(c_k)_{k\in K}$.
\item For each bounded interval $I$ there exists a positive constant $\alpha(I)$ such that
\begin{equation*}
\int_I\abs{\sum_{k\in K}c_ke^{i\omega_kt}}^2\ dt\le\alpha(I)\sum_{k\in K}\abs{c_k}^2
\end{equation*}
for all complex families $(c_k)_{k\in K}$. 
\item For each bounded interval $I$ of length $>2\pi D^+$ there exists a positive constant $\beta(I)$ such that
\begin{equation*}
\beta(I)\sum_{k\in K}\abs{c_k}^2\le
\int_I\abs{\sum_{k\in K}c_ke^{i\omega_kt}}^2\ dt
\end{equation*}
for all complex families $(c_k)_{k\in K}$. 
\end{enumerate}
\end{theorem}
\noindent In fact, the constants $\alpha(I)$ and $\beta(I)$ depend only on the length of $I$.
The inequalities in (ii) and (iii) are called \emph{direct} and \emph{inverse} inequalities, respectively.

\begin{remark}\label{r22}
If  $A:X\to\RR$ and  $B:X\to\RR$ are two nonnegative functions, defined on some set $X$, then we will write, following Vinogradov, 
$A\ll B$ if there exists a positive constant $\theta$ such that $A(x)\le\theta B(x)$ for all  $x\in X$, and  $A\asymp B$ if $A\ll B$ and $B\ll A$.
For example, in the estimates (ii) and (iii) above the integral and the sum may be considered as real-valued functions defined on the vector space of the complex families $(c_k)_{k\in K}$, and we may write the inequalities in the form
\begin{equation*}
\int_I\abs{\sum_{k\in K}c_ke^{i\omega_kt}}^2\ dt\ll\sum_{k\in K}\abs{c_k}^2
\qtq{and}
\sum_{k\in K}\abs{c_k}^2\ll
\int_I\abs{\sum_{k\in K}c_ke^{i\omega_kt}}^2\ dt,
\end{equation*}
respectively, if we do not want to indicate explicitly the constants $\alpha(I)$ and $\beta(I)$.
\end{remark}

We will need an extension of Theorem \ref{t21} to the more general where $\Omega$ has a finite upper density, but it is not necessarily uniformly separated.
If $D^+(\Omega)<\infty$, then we may enumerate the elements of $\Omega$ into an increasing sequence $(\omega_k)$, where $k$ runs over a (finite or infinite) subset of $\ZZ$, formed by consecutive integers.
Assume for simplicity that $k$ runs over all integers.
Given an arbitrary bounded interval $I$ of length $\abs{I}$, by \cite[Proposition 1.4]{BaiKomLor111} or \cite[Proposition 9.3]{KomLor2005} there exists a positive integer $M$ such that $\Omega$ satisfies the \emph{weakened gap condition}
\begin{equation}\label{23}
\gamma_M=\gamma_M(\Omega):=\inf_k\frac{\omega_{k+M}-\omega_k}{M}>0,
\end{equation}
and 
\begin{equation}\label{24}
2\pi D^+\le \frac{2\pi}{\gamma_M}<\abs{I}.
\end{equation}

Fix $0<\varepsilon\le \gamma_M$ arbitrarily.
For each maximal chain $\omega_k,\ldots,\omega_n$ satisfying
\begin{equation*}
\omega_{j+1}-\omega_j<\varepsilon\qtq{for} j=k+1,\ldots, n
\end{equation*}
(note that it has at most $M$ elements), we introduce the  divided differences $e_k(t)$, \ldots, $e_n(t)$ by Newton's formulas
\begin{equation*}
e_k(t):=e^{i\omega_kt},\quad e_{k+1}(t):=\frac{e^{i\omega_{k+1}t}-e^{i\omega_kt}}{\omega_{k+1}-\omega_k},\ldots
\end{equation*}
(see \cite{BaiKomLor111} or \cite{KomLor2005} for more details), and we rewrite the usual exponential sums in the form
\begin{equation}\label{25}
\sum_{k\in \ZZ} c_ke^{i\omega_kt}=\sum_{k\in \ZZ} b_ke_k(t).
\end{equation}
There exists always a positive constant $\theta$ such that
\begin{equation*}
\sum_{k\in \ZZ}\abs{b_k}^2
\le\theta\sum_{k\in \ZZ}\abs{c_k}^2
\end{equation*}
for all complex families $(c_k)_{k\in K}$. 
On the other hand, there exists a positive constant $\eta$ such that

\begin{equation*}
\sum_{k\in \ZZ}\abs{c_k}^2
\le\eta\sum_{k\in \ZZ}\abs{b_k}^2
\end{equation*}
for all complex families $(c_k)_{k\in K}$ if and only if $\Omega$ is uniformly separated.

\begin{theorem}\label{t23}
Assume that $D^+(\Omega)<\infty$.
\begin{enumerate}[\upshape (i)]
\item The sum \eqref{21} is a well defined function in $L^2_{\text{loc}}(\RR)$ for every complex family $(c_k)_{k\in K}$. 
\item For each bounded interval $I$ there exists a positive constant $\alpha(I)$ such that
\begin{equation*}
\int_I\abs{\sum_{k\in K}c_ke^{i\omega_kt}}^2\ dt\le\alpha(I)\sum_{k\in K}\abs{c_k}^2
\end{equation*}
for all complex families $(c_k)_{k\in K}$. 
\item Fix a bounded interval $I$ of length $\abs{I}>2\pi D^+$, choose $M$ satisfying \eqref{23} and \eqref{24}, then choose an arbitrary $\varepsilon\in(0,\gamma_M]$, and introduce the functions $e_k(t)$ according to \eqref{25}.
Then there exist two positive constants $\alpha(I)$ and $\beta(I)$ such that
\begin{equation*}
\beta(I)\sum_{k\in \ZZ}\abs{b_k}^2
\le\int_I\Bigl\vert\sum_{k\in \ZZ} c_ke^{i\omega_kt}\Bigr\vert^2\ dt
\le\alpha(I)\sum_{k\in \ZZ}\abs{b_k}^2
\end{equation*}
for all complex families $(c_k)_{k\in K}$. 
\end{enumerate}
\end{theorem}
\noindent As before, the constants $\alpha(I)$ and $\beta(I)$ depend only on the length of $I$.

\begin{remarks}\label{r24}\mbox{}
\begin{enumerate}[\upshape (i)]
\item Mehrenberger \cite{Meh2005} proved that $2\pi D^+$ is the critical length for the validity of (iii).
\item If the sequence $(\omega_k)$ has a uniform gap $\gamma>0$, then choosing $\varepsilon\in(0,\gamma]$ every maximal chain is a singleton, so that $b_k=c_k$ for all $k$.
In this special case Theorem \ref{t23} reduces to Theorem \ref{t21}.
\end{enumerate}
\end{remarks}

We will also need a vectorial extension of Theorem \ref{t21} (i), (ii).

\begin{theorem}\label{t25}
Let $\Omega=(\omega_k)_{k\in K}$ be a uniformly separated family of real numbers, and  $(Z_k)_{k\in K}$ a uniformly bounded family of vectors in some Hilbert space $H$, so that
\begin{equation}\label{26}
\Delta:=\sup_{k\in K}\norm{Z_k}<\infty.
\end{equation}

\begin{enumerate}[\upshape (i)]
\item The sum
\begin{equation*}
\sum_{k\in K}c_ke^{i\omega_kt}Z_k
\end{equation*}
is a well defined element of $L^2_{\text{loc}}(\RR,H)$
for every complex family $(c_k)_{k\in K}$.
\item For each bounded interval $I$ there exists a positive constant $\alpha(I)$, independent of $\Delta$, such that
\begin{equation*}
\int_I\Bigl\lVert\sum_{k\in K}c_ke^{i\omega_kt}Z_k\Bigr\rVert^2\ dt
\le\alpha(I) \Delta^2\sum_{k\in K}\abs{c_k}^2
\end{equation*}
for all complex families $(c_k)_{k\in K}$.
\end{enumerate}
\end{theorem}

\begin{proof}
This is an easy adaptation of the usual proof of Theorem \ref{t21} (ii); see, e.g., the proof of \cite[Theorem 6.1, p. 90]{KomLor2005}.
\end{proof}

Finally, we will also need a vectorial generalization of a powerful estimation method of Haraux \cite{Har1989}.
Let us consider a finite number of families of real numbers:
\begin{equation}\label{27}
\Omega_j=(\omega_{j,k})_{k\in K_j},
\quad j=1,\ldots,J,
\end{equation}
and corresponding nonzero vectors $Z_{j,k}$ in some Hilbert space $H$.
The following proposition is a  special case of \cite[Theorem 6.2]{KomLor2005}:

\begin{theorem}\label{t26}
Assume that the families \eqref{27} are uniformly separated. 
Furthermore, assume that there exist finite subsets $F_j\subset K_j$, a bounded interval $I_0$, and two positive constants $\alpha'$, $\beta'$ such that
\begin{equation}\label{28}
\beta'\sum_{j=1}^J\sum_{k\in K_j\setminus F_j}\abs{c_{j,k}}^2
\le\int_{I_0}\abs{\sum_{j=1}^J\sum_{k\in K_j\setminus F_j}c_{j,k}e^{i\omega_{j,k}t}Z_{j,k}}^2\ dt
\le\alpha'\sum_{j=1}^J\sum_{k\in K_j\setminus F_j}\abs{c_{j,k}}^2
\end{equation}
for all complex families
\begin{equation*}
(c_{j,k})_{k\in K_j\setminus F_j},\quad j=1,\ldots,J.
\end{equation*}
Finally, assume that for each exceptional index $(j,k)$ such that $k\in F_j$, the
exponent $\omega_{j,k}$ has a positive distance from the sets
\begin{equation*}
\set{\omega_{j,n}\ :\ n\in K_j,\ n\ne k}
\end{equation*}
and
\begin{equation*}
\set{\omega_{\ell,n}\ :\ n\in K_{\ell}}\qtq{for all}\ell\ne j.
\end{equation*}
Then for each bounded interval $I$ of length $>\abs{I_0}$ there exist two positive constants $\alpha(I)$ and $\beta(I)$ such that
\begin{equation*}
\beta(I)\sum_{j=1}^J\sum_{k\in K_j}\abs{c_{j,k}}^2
\le\int_I\abs{\sum_{j=1}^J\sum_{k\in K_j}c_{j,k}e^{i\omega_{j,k}t}Z_{j,k}}^2\ dt
\le\alpha(I)\sum_{j=1}^J\sum_{k\in K_j}\abs{c_{j,k}}^2
\end{equation*}
holds for all complex complex families
\begin{equation*}
(c_{j,k})_{k\in K_j},\quad j=1,\ldots,J.
\end{equation*}
\end{theorem}

\begin{remark}\label{r27}
The assumption \eqref{28} is stronger than the assumption  \eqref{26} of Theorem \ref{t25}.
Indeed, applying \eqref{28} to one-element sums we obtain that
\begin{equation*}
\beta'\le\abs{I_0}\cdot\norm{Z_{j,k}}^2\le \alpha'
\end{equation*}
for all $j$ and $k\in K_j\setminus F_j$.
Since $F_1\cup\cdots\cup F_J$ is a finite set and the vectors $Z_{j,k}$ are different from zero, we may choose two other positive constants $\beta''\le\beta'$ and  $\alpha''\ge\alpha'$ such that
\begin{equation*}
\beta''\le\abs{I_0}\cdot\norm{Z_{j,k}}^2\le \alpha''
\end{equation*}
for all $j$ and $k\in K_j$.
Hence \eqref{26} is satisfied with $\Delta=\sqrt{\alpha''/\abs{I_0}}$.
\end{remark}

\section{Pointwise observability}\label{s3}

Given four positive constants $a,c,d,r$, we consider the following system of linear partial differential equations with $2\pi$-periodic boundary conditions:

\begin{equation*}
\begin{cases}
u_t+u_{xxx}+av_{xxx}=0\qtq{in}\RR\times(0,2\pi),\\
cv_t+rv_x+v_{xxx}+du_{xxx}=0\qtq{in}\RR\times(0,2\pi),\\
\frac{\partial^j u}{\partial x^j}(t,0)=\frac{\partial^j u}{\partial x^j}(t,2\pi)\qtq{for}t\in\RR,\quad j=0,1,2,\\
\frac{\partial^j v}{\partial x^j}(t,0)=\frac{\partial^j v}{\partial x^j}(t,2\pi)\qtq{for}t\in\RR,\quad j=0,1,2,\\
u(0,x)=u_0(x)\qtq{for}x\in (0,2\pi),\\
v(0,x)=v_0(x)\qtq{for}x\in (0,2\pi).
\end{cases}
\end{equation*}

It will be more convenient to write $u(t)(x):=u(t,x)$, and  to work on the unit circle $\TT $ without boundary conditions, i.e., to rewrite our system in the form

\begin{equation}\label{31}
\begin{cases}
u_t+u_{xxx}+av_{xxx}=0\qtq{in}\RR\times\TT,\\
cv_t+rv_x+v_{xxx}+du_{xxx}=0\qtq{in}\RR\times\TT,\\
u(0)=u_0\qtq{and}v(0)=v_0.
\end{cases}
\end{equation}
Let us write \eqref{31} in the abstract form
\begin{equation*}
Z'+AZ=0,\quad Z(0)=Z_0
\end{equation*}
in the Hilbert space
\begin{equation*}
H:=L^2(\TT )\times L^2(\TT )
\end{equation*}
with
\begin{equation*}
Z=
\begin{pmatrix}
u\\ v
\end{pmatrix}
,\quad
Z_0=
\begin{pmatrix}
u_0\\ v_0
\end{pmatrix}
\end{equation*}
and the linear operator
\begin{equation*}
A=\frac{1}{c}
\begin{pmatrix}
cD^3 & acD^3 \\
dD^3 & rD+D^3
\end{pmatrix}
,\quad D(A):=H^3(\TT)\times H^3(\TT),
\end{equation*}
where $Z'$ is the time derivative and $D$ is the spatial derivative.  The well posedness of \eqref{31} in most cases follows from the following result.

\begin{proposition}\label{p31}
If $ad\ne 1$, then $A$ is an anti-adjoint operator in $H$ for the Euclidean norm given by
\begin{equation*}
\norm{(z_1,z_2)}^2:=\int_{\TT }\abs{z_1}^2+\frac{ac}{d}\abs{z_2}^2\ dx,
\end{equation*}
and hence it generates a group of isometries in $H$.
\end{proposition}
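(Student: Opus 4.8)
The proposition asserts that $A$ is anti-adjoint, i.e.\ densely defined with $A^{*}=-A$; once this is known, the generation of a strongly continuous group of isometries is Stone's theorem. Density of $D(A)=H^{3}(\TT)\times H^{3}(\TT)$ in $H$ is clear, so two things remain: the skew-symmetry $\langle AZ,W\rangle=-\langle Z,AW\rangle$ for $Z,W\in D(A)$, and the domain inclusion $D(A^{*})\subseteq D(A)$ (the reverse inclusion being contained in skew-symmetry).

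\emph{Skew-symmetry.} Writing $Z=(u,v)$, $W=(p,q)$ with components in $H^{3}(\TT)$, I would expand $\langle AZ,W\rangle$ and $\langle Z,AW\rangle$ from the definitions of $A$ and of the weighted inner product and integrate by parts on $\TT$: there are no boundary terms on the circle, $D$ and $D^{3}$ are formally skew-adjoint and $D^{2}$ formally self-adjoint, so each term $\int_{\TT}(D^{3}u)\overline{p}$ becomes $-\int_{\TT}u\,\overline{D^{3}p}$, and similarly for the $D$-terms. The weight $ac/d$ in the second slot of the norm is precisely what makes the two cross terms coming from the off-diagonal entries $aD^{3}$ and $(d/c)D^{3}$ cancel, after which $\langle AZ,W\rangle=-\langle Z,AW\rangle$ follows; in particular $\operatorname{Re}\langle AZ,Z\rangle=0$, so $A$ and $-A$ are dissipative. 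This step does not use $ad\ne1$. (Equivalently, in Fourier variables the $n$-th block of $A$ is a $2\times2$ matrix $A_{n}$ for which $\operatorname{diag}(1,ac/d)\,A_{n}$ is skew-Hermitian.)

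\emph{Anti-adjointness.} Here I would use Fourier series: the basis $(e^{inx})_{n\in\ZZ}$ splits $H$ into the orthogonal sum of the two-dimensional $A$-invariant subspaces spanned by $(e^{inx},0)$ and $(0,e^{inx})$, on each of which $A$ acts as a matrix $A_{n}$ with leading part $-in^{3}P$, where $P=\bigl(\begin{smallmatrix}1&a\\ d/c&1/c\end{smallmatrix}\bigr)$ and $\det P=(1-ad)/c$. By the computation above each $A_{n}$ is skew-symmetric for the inner product of $H$, hence has purely imaginary spectrum, so for real $\lambda\ne0$ the matrix $\lambda I-A_{n}$ is invertible with $\norm{(\lambda I-A_{n})^{-1}}\le|\lambda|^{-1}$ uniformly in $n$, since $\norm{(\lambda I-A_{n})Z_{n}}^{2}=\lambda^{2}\norm{Z_{n}}^{2}+\norm{A_{n}Z_{n}}^{2}$. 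Consequently, for any $F=\sum_{n}F_{n}\in H$ the vector $Z:=\sum_{n}(\lambda I-A_{n})^{-1}F_{n}$ lies in $H$ and solves $(\lambda I-A)Z=F$ distributionally; then $AZ=\lambda Z-F\in H$, and because $ad\ne1$ makes $P$ invertible, an elliptic regularity bootstrap in the Fourier variable (solve the second component for $D^{3}q$, gaining three derivatives on $q$, then recover three derivatives on $p$) forces $Z\in H^{3}(\TT)\times H^{3}(\TT)=D(A)$. Hence $\operatorname{ran}(I-A)=\operatorname{ran}(I+A)=H$, which together with skew-symmetry gives $A^{*}=-A$, the skew-adjoint analogue of the basic self-adjointness criterion. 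Stone's theorem then furnishes the strongly continuous unitary group $(e^{tA})_{t\in\RR}$, a fortiori a group of isometries.

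\emph{Main obstacle.} The substance of the argument — and the only place $ad\ne1$ enters — is the regularity implication ``$AZ\in H\Rightarrow Z\in H^{3}(\TT)\times H^{3}(\TT)$''. When $ad=1$ the principal symbol $P$ is singular, the generator of the block group $\bigoplus_{n}e^{tA_{n}}$ has a domain strictly larger than $D(A)$, $A$ ceases to be anti-adjoint, and the hypothesis cannot be dropped. I expect that carrying out this elliptic estimate (equivalently, the uniform invertibility of $\lambda I-A_{n}$ together with the correct $n$-decay of $(\lambda I-A_{n})^{-1}$) is the one genuinely non-formal point of the proof.
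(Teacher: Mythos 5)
Your proof is correct, but it reaches maximality by a different route than the paper. You establish skew-symmetry by integration by parts (as the paper does) and then prove that the skew-symmetric operator is actually skew-adjoint via the range criterion $\operatorname{ran}(\lambda I-A)=H$ for real $\lambda\ne 0$, solving $(\lambda I-A)Z=F$ block by block in the Fourier basis with the uniform bound $\norm{(\lambda I-A_n)^{-1}}\le\abs{\lambda}^{-1}$ and then recovering $Z\in H^3(\TT)\times H^3(\TT)$ from the invertibility of the principal symbol $P$. The paper instead computes the adjoint domain directly: it takes $(\varphi,\psi)\in D(A^*)$, tests the boundedness of $(u,v)\mapsto((\varphi,\psi),A(u,v))_H$ against $(u,0)$ and $(0,v)$ with smooth $u,v$ to conclude that the distributions $\varphi_{xxx}+a\psi_{xxx}$ and $d\varphi_{xxx}+r\psi_x+\psi_{xxx}$ lie in $L^2(\TT)$, and eliminates $\varphi_{xxx}$ to obtain $(1-ad)\psi_{xxx}+r\psi_x\in L^2(\TT)$, whence $\psi\in H^3(\TT)$ and then $\varphi\in H^3(\TT)$. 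The decisive elimination is the same algebra as your inversion of $P$, and the hypothesis $ad\ne 1$ enters at exactly the same point, so the two arguments are equivalent in substance. Yours buys an explicit resolvent estimate and anticipates the spectral decomposition that the paper develops anyway in Lemma \ref{l32}, while the paper's direct computation of $D(A^*)$ is shorter and avoids having to justify that the blockwise-constructed $Z$ coincides with a distributional solution. Your closing observation that anti-adjointness genuinely fails when $ad=1$ is consistent with the paper, which covers that case by the separate eigenfunction-expansion proof of Theorem \ref{t33}.
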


\begin{proof}
It  is clear that $D(A)$ is dense in $H$.
We have to prove that $A^*=-A$.
First we show that $-A\subset A^*$, i.e., $(u,v)\in D(A^*)$ and  $A^*(u,v)=-A(u,v)$ for all $(u,v)\in D(A)$.
Indeed, for any $(\varphi ,\psi),(u,v)\in D(A)$, we obtain by  integrating by parts the equality
\begin{align*}
((\varphi,\psi) ,A(u,v))_H
&=-\int_{\TT }u (\varphi_{xxx}+a\psi_{xxx})dx-\frac{a}{d}\int_{\TT } v(r\psi_x + \psi_{xxx}+d\varphi_{xxx})dx \\
&= -(A(\varphi,\psi),(u,v))_H.
\end{align*}

It remains to show that  $D(A^*)=D(-A)$, i.e., that each $(\varphi,\psi) \in D(A^*)$ belongs to $D(A)$.

Pick any $(\varphi,\psi) \in D(A^*)$.
Then there exists a constant $C>0$ satisfying for all $(u,v)\in D(A)$ the inequality
\begin{equation*}
\abs{((\varphi,\psi) , A(u,v))_H}
\le C \norm{(u,v)}_H,
\end{equation*}
or equivalently
\begin{equation*}
\abs{\int_{\TT }  \varphi (u_{xxx}+av_{xxx})+\frac{a}{d}\psi(rv_x +v_{xxx}+du_{xxx})dx}
\le C \left(\int_{\TT} [u^2+\frac{ac}{d}v^2 ]dx\right)^\frac{1}{2}.
\end{equation*}

Choosing $v=0$ and $u\in C^\infty(\TT)$ hence we infer from that the distributional derivative $\varphi_{xxx}+a\psi_{xxx}$ belongs to $ L^2(\TT)$.
Similarly, choosing $u=0$ and $v\in C^\infty (\TT)$ we obtain that the distributional derivative $d\varphi_{xxx}+r\psi_x+\psi_{xxx}$ belongs to $ L^2(\TT)$.
Combining the two relations we obtain that
$(1-ad)\psi_{xxx}+r\psi_x\in L^2(\TT)$, and in case $ad\ne 1$ this implies that $\psi \in H^3(\TT)$.
Combining this property with the relation $\varphi_{xxx}+a\psi_{xxx}\in L^2(\TT)$ we conclude that $\varphi_{xxx}\in L^2(\TT)$, and therefore $\varphi \in H^3(\TT)$.
\end{proof}

In order to establish the well posedness of \eqref{31} in the general case we determine the eigenvalues and eigenfunctions of the  operator $A$.
There exists an orthogonal basis
\begin{equation}\label{32}
e^{-ikx}Z_k^{\pm},\quad k\in\ZZ
\end{equation}
of $H$, consisting of eigenfunctions of $A$.
Indeed, for each fixed $k$, 
$e^{-ikx}(u_k,v_k)$
is an eigenvector of $A$ with the eigenvalue $i\omega_k$ if and only if
\begin{equation*}
\begin{cases}
(\omega_k-k^3)u_k-ak^3v_k=0,\\
-dk^3u_k+(c\omega_k+rk-k^3)v_k=0.
\end{cases}
\end{equation*}
There exist non-trivial solutions if and only if
\begin{equation*}
\begin{vmatrix}
\omega_k-k^3 & -ak^3\\ -dk^3 & c\omega_k+rk-k^3
\end{vmatrix}
=0,
\end{equation*}
or equivalently if
\begin{equation*}
c\omega_k^2+(rk-(c+1)k^3)\omega_k+(1-ad)k^6-rk^4=0.
\end{equation*}
Hence we have two possible exponents, given by the formula
\begin{equation}\label{33}
2c\omega_k^{\pm}=(c+1)k^3-rk\\
\pm k\sqrt{4acdk^4+\left[(c-1)k^2+r\right]^2}.
\end{equation}

If $k\ne 0$, then $\omega_k^-\ne\omega_k^+$, and two corresponding non-zero eigenvectors are given by the formula
\begin{equation}\label{34}
Z_k^{\pm}
:=2ck^{-3}(ak^3,\omega_k^{\pm}-k^3)
=\left(2ac,1-c-rk^{-2}\pm \sqrt{4acd+\left[c-1+rk^{-2}\right]^2}\right).
\end{equation}

If $k=0$, then both eigenvalues are equal to zero, and two linearly independent eigenvectors are given for example by the
formula
\begin{equation}\label{35}
Z_0^{\pm}:=\left(2ac,\pm \sqrt{4acd}\right).
\end{equation}

\begin{lemma}\label{l32}
The functions \eqref{32} given by \eqref{33}--\eqref{35} form an orthogonal basis in $H$, and
\begin{equation*}
A(e^{-ikx}Z_k^{\pm})=i\omega_k^{\pm}(e^{-ikx}Z_k^{\pm})
\end{equation*}
for all $k\in\ZZ$.

Furthermore, we have
\begin{equation*}
Z_k^{\pm}\to Z^{\pm}:=\left(2ac,1-c\pm \sqrt{4acd+(c-1)^2}\right)\qtq{as}k\to\pm\infty,
\end{equation*}
and $\norm{Z_k^{\pm}}\asymp 1$.
\end{lemma}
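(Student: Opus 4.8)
The plan is to verify the three assertions by direct computation, the only delicate point being the orthogonality of $Z_k^+$ and $Z_k^-$ within a fixed frequency. For the \emph{eigenvalue relations}, note that $\partial_x e^{ikx}=ik\,e^{ikx}$, so each function $e^{ikx}(u_k,v_k)$ with $(u_k,v_k)\in\CC^2$ lies in $D(A)=H^3(\TT)^2$, and substituting it into $A(e^{ikx}(u_k,v_k))=i\omega_k\,e^{ikx}(u_k,v_k)$ reduces, after cancelling the common factor $e^{ikx}$, exactly to the $2\times2$ homogeneous linear system written just before \eqref{33}; a nonzero solution exists precisely when its determinant vanishes, which is the displayed quadratic in $\omega_k$, whose two roots are \eqref{33}. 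Solving the first scalar equation and normalizing by $2ck^{-3}$ produces the vectors \eqref{34}. For $k\ne0$ the discriminant $k^2\bigl(4acdk^4+[(c-1)k^2+r]^2\bigr)$ is strictly positive since $a,c,d>0$, so $\omega_k^+\ne\omega_k^-$ and $Z_k^+\ne Z_k^-$; for $k=0$ the operator $A$ annihilates every constant function, so both eigenvalues vanish and any basis of $\CC^2$ — in particular $Z_0^\pm$ of \eqref{35} — consists of eigenvectors. Note that nothing here uses $ad\ne1$, which is why this lemma also covers the exceptional case left open by Proposition \ref{p31}.

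For \emph{orthogonality and completeness}: when $k\ne n$ the functions $e^{ikx}Z_k^\pm$ and $e^{inx}Z_n^\pm$ are orthogonal in $H$ for free, since $\int_\TT e^{i(k-n)x}\,dx=0$, so it suffices to check that for each fixed $k$ the vectors $Z_k^+,Z_k^-$ are orthogonal in $\CC^2$ for the weighted inner product $\vv{(z_1,z_2),(w_1,w_2)}=z_1\overline{w_1}+\tfrac{ac}{d}z_2\overline{w_2}$. Writing $Z_k^\pm=(2ac,\,p_k^\pm)$ with $p_k^\pm=s_k\pm\sqrt{4acd+s_k^2}$, $s_k:=1-c-rk^{-2}$ (and $p_0^\pm=\pm\sqrt{4acd}$), one has $p_k^+p_k^-=s_k^2-(4acd+s_k^2)=-4acd$, hence $\vv{Z_k^+,Z_k^-}=(2ac)^2+\tfrac{ac}{d}(-4acd)=0$. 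Each pair $Z_k^+,Z_k^-$ is nonzero (first coordinate $2ac>0$) and orthogonal, hence a basis of $\CC^2$; therefore $e^{ikx}(1,0)$ and $e^{ikx}(0,1)$ lie in the closed linear span of $\{e^{ikx}Z_k^\pm\}$, and since $\{e^{ikx}(1,0),e^{ikx}(0,1):k\in\ZZ\}$ is an orthogonal basis of $H=L^2(\TT)^2$, the family \eqref{32} is complete, hence an orthogonal basis.

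Finally, for the \emph{limit and the size estimate}: the right-hand side of \eqref{34} depends on $k$ only through $rk^{-2}$, which tends to $0$ as $\abs{k}\to\infty$ (so the limit is the same for $k\to+\infty$ and $k\to-\infty$) and the map is continuous there, whence $Z_k^\pm\to Z^\pm$; the same observation shows $(p_k^\pm)_{k\in\ZZ}$ is bounded, so $\norm{Z_k^\pm}^2$, being a fixed positive multiple of $(2ac)^2+\tfrac{ac}{d}\abs{p_k^\pm}^2$, is bounded above, while it is bounded below by the positive constant coming from the first coordinate $2ac$; hence $\norm{Z_k^\pm}\asymp1$ uniformly in $k$. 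The only step that is not routine bookkeeping is the identity $p_k^+p_k^-=-4acd$ and its interaction with the weight $\tfrac{ac}{d}$, which is exactly what dictates the choice of Euclidean structure on $H$ in Proposition \ref{p31}; I would display that computation and leave the remaining substitutions to the reader.
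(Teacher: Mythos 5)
Your proof is correct and follows essentially the same route as the paper: the key identity is the same product $p_k^+p_k^-=-4acd$ cancelling against the weight $\tfrac{ac}{d}$, and the limit and the estimate $\norm{Z_k^{\pm}}\asymp 1$ are obtained just as in the text. You are in fact slightly more complete than the paper's proof, which only verifies orthogonality and leaves the completeness of the family (that it is a \emph{basis}, via $Z_k^+,Z_k^-$ spanning $\CC^2$ for each $k$) and the eigenvalue relation to the surrounding discussion.
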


According to Remark \ref{r22}, the notation $\norm{Z_k^{\pm}}\asymp 1$ means that there exist two positive constants $\theta$ and $\eta$ such that
\begin{equation*}
\theta \le \norm{Z_k^+}\le\eta
\qtq{and}
\theta \le \norm{Z_k^-}\le\eta
\end{equation*}
for all $k\in\ZZ$.

\begin{proof}
The orthogonality follows from the orthogonality of the functions $e^{-ikx}$ in $L^2(\TT )$ and from the orthogonality relations
\begin{equation*}
Z_k^+\cdot Z_k^-=0\qtq{for all}k\in\ZZ.
\end{equation*}
The latter equalities may be checked directly: we have
\begin{equation*}
Z_0^+\cdot Z_0^-=4a^2c^2+\frac{ac}{d}(-4acd)=0,
\end{equation*}
and
\begin{equation*}
Z_k^+\cdot Z_k^-=4a^2c^2+\frac{ac}{d}\left[\left[1-c-rk^{-2}\right]^2-4acd-\left[c-1+rk^{-2}\right]^2\right]=0
\end{equation*}
if $k\ne 0$.

The limit relations readily follow from \eqref{34} because $k^{-2}\to 0$.
Since $Z^{\pm}\ne 0$, they imply the property $\norm{Z_k^{\pm}}\asymp 1$.
\end{proof}

Next we establish the well posedness of \eqref{31}.
Let us denote by $C_b(\RR,H)$  the Banach space of bounded continuous functions $\RR\to H$ for the uniform norm.

\begin{theorem}\label{t33}\mbox{}

\begin{enumerate}[\upshape (i)]
\item Given any $(u_0,v_0)\in H$, the system \eqref{31} has a unique solution $(u,v)\in C_b(\RR,H)$, and the linear map
\begin{equation*}
(u_0,v_0)\mapsto (u,v)
\end{equation*}
is continuous from $H$ into $C_b(\RR,H)$.

\item The \emph{energy} of the solution, defined by the formula
\begin{equation*}
E(t):=\norm{(u,v)(t)}_H^2
=\int_{\TT}\abs{u(t,x)}^2+\frac{ac}{d}\abs{v(t,x)}^2\ dx,
\end{equation*}
does not depend on $t\in\RR$.

\item The solution is given by the explicit formula
\begin{equation}\label{36}
\begin{pmatrix}
u\\ v
\end{pmatrix}
(t,x)=\sum_{k\in\ZZ}\left(c_k^+e^{i\omega_k^+t}Z_k^++c_k^-e^{i\omega_k^-t}Z_k^-\right)e^{-ikx}
\end{equation}
with suitable square summable complex coefficients $c_k^{\pm}$ satisfying the equality
\begin{equation}\label{37}
\sum_{k\in\ZZ}\left(\abs{c_k^+}^2\cdot\norm{Z_k^+}^2+\abs{c_k^-}^2\cdot\norm{Z_k^-}^2\right)
=2\pi E,
\end{equation}
and hence the relation
\begin{equation}\label{38}
\sum_{k\in\ZZ}\left(\abs{c_k^+}^2+\abs{c_k^-}^2\right)
\asymp E.
\end{equation}
\end{enumerate}
\end{theorem}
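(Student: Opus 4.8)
The plan is to derive all three assertions from the eigenfunction expansion of Lemma~\ref{l32}, since this treats the general case uniformly, in particular the exceptional régime $ad=1$ not reached by Proposition~\ref{p31}. First I would record two elementary consequences of \eqref{33}--\eqref{35}: the discriminant $4acdk^4+[(c-1)k^2+r]^2$ is manifestly nonnegative, so every $\omega_k^{\pm}$ is \emph{real} and the eigenvalues $i\omega_k^{\pm}$ are purely imaginary; and, inspecting \eqref{33}, $\abs{\omega_k^{\pm}}\ll 1+\abs{k}^3$. Given $(u_0,v_0)\in H$, since by Lemma~\ref{l32} the functions $e^{ikx}Z_k^{\pm}$ form an orthogonal basis of $H$ we may write $(u_0,v_0)=\sum_{k\in\ZZ}\bigl(c_k^+e^{ikx}Z_k^++c_k^-e^{ikx}Z_k^-\bigr)$ with square summable coefficients; Parseval's identity, combined with the orthogonality $Z_k^+\cdot Z_k^-=0$ established inside the proof of Lemma~\ref{l32} and with $\norm{e^{ikx}Z_k^{\pm}}_H\asymp\norm{Z_k^{\pm}}\asymp 1$, yields \eqref{37} and hence \eqref{38}. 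I then \emph{define} $(u,v)(t,\cdot)$ by the series \eqref{36}, and the remaining work is to show that this series converges in $C_b(\RR,H)$, represents a solution, and is the only one.

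Because $e^{ikx}Z_k^{\pm}$ is an eigenfunction of $A$ with a purely imaginary eigenvalue, each summand $c_k^{\pm}e^{i\omega_k^{\pm}t}Z_k^{\pm}e^{ikx}$ keeps a constant $H$-norm equal to $\norm{c_k^{\pm}e^{ikx}Z_k^{\pm}}_H$, and at each fixed $t$ the partial sums stay mutually orthogonal in $H$ (using $e^{ikx}\perp e^{inx}$ for $k\ne n$ together with $Z_k^+\cdot Z_k^-=0$). Hence the tail $\sum_{\abs k>N}$ has $H$-norm $\bigl(\sum_{\abs k>N}(\abs{c_k^+}^2\norm{e^{ikx}Z_k^+}_H^2+\abs{c_k^-}^2\norm{e^{ikx}Z_k^-}_H^2)\bigr)^{1/2}$, small uniformly in $t$, so the series converges in $C_b(\RR,H)$. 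The same computation at an arbitrary $t$ shows $\norm{(u,v)(t)}_H^2=\sum_k(\abs{c_k^+}^2\norm{e^{ikx}Z_k^+}_H^2+\abs{c_k^-}^2\norm{e^{ikx}Z_k^-}_H^2)$ is independent of $t$, which is assertion (ii); in particular the data-to-solution map is a linear isometry onto its range, hence continuous, the quantitative half of (i). To see that $(u,v)$ solves \eqref{31}: termwise $\partial_t\bigl(e^{i\omega_k^{\pm}t}Z_k^{\pm}e^{ikx}\bigr)+A\bigl(e^{i\omega_k^{\pm}t}Z_k^{\pm}e^{ikx}\bigr)=0$, and since $\abs{\omega_k^{\pm}}\ll 1+\abs k^3$ while $\norm{e^{ikx}}_{H^{-3}}\asymp(1+\abs k)^{-3}$, the differentiated series converges in $C_b(\RR,H^{-3}\times H^{-3})$. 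Thus $(u,v)\in C(\RR,H)\cap C^1(\RR,H^{-3}\times H^{-3})$ satisfies \eqref{31} in $H^{-3}\times H^{-3}$ with $(u,v)(0)=(u_0,v_0)$, which is the natural weak formulation.

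For uniqueness, let $S(t)$ be the $C_0$-group of isometries on $H$ prescribed by $S(t)(e^{ikx}Z_k^{\pm})=e^{i\omega_k^{\pm}t}e^{ikx}Z_k^{\pm}$, with its continuous extension to $H^{-3}\times H^{-3}$. If $(w,z)$ is the difference of two solutions in $C_b(\RR,H)\cap C^1(\RR,H^{-3}\times H^{-3})$ with the same data, then $\frac{d}{dt}\bigl(S(-t)(w,z)(t)\bigr)=S(-t)\bigl(\partial_t(w,z)(t)+A(w,z)(t)\bigr)=0$ in $H^{-3}\times H^{-3}$, so $S(-t)(w,z)(t)\equiv(w,z)(0)=0$ and $(w,z)\equiv 0$; equivalently, expanding $(w,z)(t)$ in the orthogonal basis, its coefficient functions solve the decoupled scalar ODEs $\dot c=i\omega_k^{\pm}c$ with zero initial value. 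Assertion (iii) has meanwhile been obtained: the series \eqref{36} \emph{is} the solution and \eqref{37}--\eqref{38} were proved in the first paragraph.

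The step requiring the most care is conceptual rather than computational: one must fix a notion of ``solution'' for which \eqref{36} qualifies and for which uniqueness is meaningful, because in the exceptional case $ad=1$ Proposition~\ref{p31} is unavailable — there $(1-ad)\psi_{xxx}+r\psi_x\in L^2(\TT)$ only forces $\psi\in H^1(\TT)$, so $A$ on $H^3(\TT)\times H^3(\TT)$ need not be skew-adjoint and the abstract semigroup argument collapses. Taking the time derivative in the weaker space $H^{-3}\times H^{-3}$ (where the differentiated series still converges, thanks to $\abs{\omega_k^{\pm}}\ll 1+\abs k^3$) circumvents this difficulty; once the functional-analytic framework is pinned down, everything else reduces to the Parseval and tail estimates indicated above.
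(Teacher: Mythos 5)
Your proof is correct and follows essentially the same route as the paper's own (second, general) argument: expansion in the orthogonal eigenbasis $e^{ikx}Z_k^{\pm}$ of Lemma \ref{l32}, Parseval's identity to obtain \eqref{37}--\eqref{38}, and a uniform tail/Cauchy estimate to get convergence in $C_b(\RR,H)$ together with the $t$-independence of the norm. The only difference is that you spell out what the paper compresses into ``by a standard method'' --- the precise weak formulation in $H^{-3}\times H^{-3}$ and the uniqueness argument via the isometry group $S(t)$ --- which is an amplification of the same approach rather than a new one.
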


\begin{proof}
If $ad\ne 1$, then the theorem follows from  Proposition \ref{p31} and Lemma \ref{l32}.

The following alternative proof works even if $ad=1$.
The functions $(2\pi)^{-1}e^{-ikx}$, $k\in\ZZ$ form an orthonormal basis in $L^2(\TT)$.
Furthermore, by Lemma \ref{l32} the non-zero vectors $Z_k^{\pm}$ are orthogonal in $\CC^2$, and $\norm{Z_k^{\pm}}\asymp 1$.
Hence the functions $e_k^{\pm}(x):=e^{-ikx}Z_k^{\pm}$, $k\in\ZZ$ form an orthogonal basis in $H$, and $\norm{e_k^{\pm}}\asymp 1$.

By a standard method, the theorem will be proved if we show that for any given square summable sequences $(c_k^{\pm})$ the series
\begin{equation*}
U(t,x):=\sum_{k\in\ZZ}\left(c_k^+e^{i\omega_k^+t}Z_k^++c_k^-e^{i\omega_k^-t}Z_k^-\right)e^{-ikx}
\end{equation*}
is uniformly convergent in $C_b(\RR,H)$, and $\norm{U(t)}$ is independent of $t$.
Since $C_b(\RR,H)$ is a Banach space, it suffices to check the uniform Cauchy criterium.
Setting
\begin{equation*}
U_n(t,x):=\sum_{k=-n}^n\left(c_k^+e^{i\omega_k^+t}Z_k^++c_k^-e^{i\omega_k^-t}Z_k^-\right)e^{-ikx},\quad n=1,2,\ldots,
\end{equation*}
the following equality holds for all $n>m>0$
 and $t\in\RR$:
\begin{equation*}
U_n(t,x)-U_m(t,x)
=\sum_{m<\abs{k}\le n}\left(c_k^+e^{i\omega_k^+t}Z_k^++c_k^-e^{i\omega_k^-t}Z_k^-\right)e^{-ikx}.
\end{equation*}
Using the above mentioned orthogonality properties and the relation $\norm{Z_k^{\pm}}\asymp 1$  hence we infer that
\begin{align*}
\norm{U_n(t)-U_m(t)}_H^2
&=2\pi\sum_{m<\abs{k}\le n}\left(\norm{c_k^+e^{i\omega_k^+t}Z_k^+}_{\CC^2}^2+\norm{c_k^-e^{i\omega_k^-t}Z_k^-}_{\CC^2}^2\right)\\
&=2\pi\sum_{m<\abs{k}\le n}\left(\norm{c_k^+Z_k^+}_{\CC^2}^2+\norm{c_k^-Z_k^-}_{\CC^2}^2\right)\\
&\asymp \sum_{m<\abs{k}\le n}\left(\abs{c_k^+}^2+\abs{c_k^-}^2\right).
\end{align*}
The Cauchy property follows by observing that the last expression is independent of $t$, and that it converges to zero as $n>m\to\infty$ by the convergence of the numerical series
\begin{equation*}
\sum_{k\in\ZZ}\left(\abs{c_k^+}^2+\abs{c_k^-}^2\right).
\end{equation*}
The above proof also yields (by taking $m=-1$) the equalities \eqref{36}--\eqref{38}.
\end{proof}

Now we turn to the question of observability.
We need some additional information on the eigenvalues:

\begin{lemma}\label{l34}
We have
\begin{equation*}
\lim_{k\to\pm\infty}(\omega_{k+1}^+-\omega_k^+)=\infty.
\end{equation*}
\end{lemma}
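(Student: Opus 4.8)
The plan is to extract from the explicit formula \eqref{33} an asymptotic expansion of $\omega_k^+$ of the shape $\omega_k^+=\mu k^3+O(\abs{k})$ as $\abs{k}\to\infty$, with a strictly positive leading coefficient $\mu$, and then to read off the growth of the consecutive gaps directly.

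First I would rewrite \eqref{33} in the form
\begin{equation*}
2c\omega_k^+=(c+1)k^3-rk+k\sqrt{4acdk^4+\left[(c-1)k^2+r\right]^2}
\end{equation*}
and estimate the square root. Expanding the radicand gives
\begin{equation*}
4acdk^4+\left[(c-1)k^2+r\right]^2=\bigl(4acd+(c-1)^2\bigr)k^4+2(c-1)rk^2+r^2,
\end{equation*}
so that for $k\ne 0$ we may factor out $k^4$ and use $\sqrt{1+x}=1+O(x)$ to obtain
\begin{equation*}
\sqrt{4acdk^4+\left[(c-1)k^2+r\right]^2}=k^2\sqrt{4acd+(c-1)^2}+O(1)\qtq{as}\abs{k}\to\infty.
\end{equation*}
Multiplying by $k$ and substituting back, and absorbing the term $-rk$ and the contribution $k\cdot O(1)$ into a uniform $O(\abs{k})$ remainder (this bookkeeping is insensitive to the sign of $k$, since the square root is always nonnegative), we arrive at
\begin{equation*}
\omega_k^+=\mu k^3+O(\abs{k}),\qquad \mu:=\frac{(c+1)+\sqrt{4acd+(c-1)^2}}{2c}.
\end{equation*}
Here $\mu>0$ because $a,c,d>0$ force $\sqrt{4acd+(c-1)^2}>0$ and $c+1>0$; this step uses nothing about whether $ad=1$.

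Finally I would compute, using $\abs{k+1}\le\abs{k}+1$ to keep both error terms $O(\abs{k})$,
\begin{equation*}
\omega_{k+1}^+-\omega_k^+=\mu\bigl((k+1)^3-k^3\bigr)+O(\abs{k})=3\mu k^2+3\mu k+\mu+O(\abs{k})=3\mu k^2+O(\abs{k}),
\end{equation*}
and since $\mu>0$ the cubic-growth term $3\mu k^2$ dominates the remainder, so the right-hand side tends to $+\infty$ as $k\to\pm\infty$. I do not expect any genuine obstacle: this is an elementary asymptotic expansion of the known eigenvalue formula. The only point requiring a little care is the limit $k\to-\infty$, where one must track the sign of the factor $k$ multiplying the (always nonnegative) square root; but since only an $O(\abs{k})$ remainder is needed and the $3\mu k^2$ term dominates it, the sign bookkeeping does not affect the conclusion.
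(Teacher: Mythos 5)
Your proof is correct and follows essentially the same route as the paper: both extract the asymptotic expansion $\omega_k^+=\mu k^3+O(k)$ with $\mu=\frac{c+1+\sqrt{4acd+(c-1)^2}}{2c}>0$ from \eqref{33} and conclude from $(k+1)^3-k^3=3k^2+O(k)$. The only cosmetic difference is that the paper first uses the antisymmetry $\omega_{-k}^+=-\omega_k^+$ to reduce to $k\to+\infty$, whereas you handle both signs directly with uniform $O(\abs{k})$ bookkeeping.
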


\begin{proof}
Since $\omega_{-k}^+=-\omega_k^+$, it suffices to consider the case $k\to\infty$.
Since
\begin{equation*}
2ck^{-3}\omega_k^+
=c+1+rk^{-2}+\sqrt{4acd+[c-1+rk^{-2}]^2},
\end{equation*}
setting
\begin{equation*}
A:=\frac{c+1+\sqrt{4acd+(c-1)^2}}{2c}\end{equation*}
for brevity, we have
\begin{equation*}
\omega_k^+=Ak^3+O(k)\qtq{as}k\to\infty.
\end{equation*}
Hence
\begin{equation*}
\omega_{k+1}^+-\omega_k^+
=A\left[(k+1)^3-k^3\right]+O(k)
=3Ak^2+O(k)\qtq{as}k\to\infty.
\end{equation*}
Since $A>0$, the lemma follows.
\end{proof}

\begin{lemma}\label{l35}
We have
\begin{equation*}
\lim_{k\to\pm\infty}(\omega_{k+1}^--\omega_k^-)=
\begin{cases}
\infty&\text{if $ad<1$,}\\
-\infty&\text{if $ad>1$,}\\
\frac{-r}{c+1}&\text{if $ad=1$.}
\end{cases} 
\end{equation*}
\end{lemma}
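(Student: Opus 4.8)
The plan is to mirror the proof of Lemma~\ref{l34}: from the explicit formula \eqref{33} I would extract an asymptotic expansion of $\omega_k^-$ up to the linear term in $k$ as $k\to+\infty$, then take the first difference and read off the limit from the surviving leading coefficient. Since the expression under the square root in \eqref{33} is even in $k$, a direct inspection shows that $\omega_{-k}^-=-\omega_k^-$, whence $\omega_{-k+1}^--\omega_{-k}^-=\omega_k^--\omega_{k-1}^-$; so it suffices to treat $k\to+\infty$, the case $k\to-\infty$ then following with the same limit.

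For $k>0$ I would rewrite \eqref{33} as
\begin{equation*}
2c\omega_k^-=(c+1)k^3-rk-k^3\sqrt{4acd+\bigl[(c-1)+rk^{-2}\bigr]^2}
\end{equation*}
and Taylor expand $t\mapsto\sqrt{4acd+[(c-1)+t]^2}$ about $t=0$ with $t=rk^{-2}$. Setting
\begin{equation*}
B:=\frac{c+1-\sqrt{4acd+(c-1)^2}}{2c},
\end{equation*}
this produces an expansion of the form $\omega_k^-=Bk^3+\beta k+O(k^{-1})$ as $k\to+\infty$, where $2c\beta=-r\bigl(1+(c-1)/\sqrt{4acd+(c-1)^2}\bigr)$ collects the linear Taylor term and the $-rk$ contribution. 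The key elementary remark is that $(c+1)^2-\bigl(4acd+(c-1)^2\bigr)=4c(1-ad)$, so $B$ is positive, zero, or negative precisely according as $ad<1$, $ad=1$, or $ad>1$.

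If $B\ne0$, then $\omega_{k+1}^--\omega_k^-=B\bigl[(k+1)^3-k^3\bigr]+O(k)=3Bk^2+O(k)$, which tends to $+\infty$ when $ad<1$ and to $-\infty$ when $ad>1$. If $ad=1$, then $\sqrt{4acd+(c-1)^2}=\sqrt{4c+(c-1)^2}=c+1$, the cubic term in the expansion disappears, and $\omega_k^-=\beta k+O(k^{-1})$, so $\omega_{k+1}^--\omega_k^-=\beta+O(k^{-1})\to\beta$, which upon inserting $\sqrt{4acd+(c-1)^2}=c+1$ into the formula for $\beta$ is exactly the value recorded in the statement. The one step that needs care is this case $ad=1$: the $k^3$ terms cancel exactly, so one must check that the remainder in the expansion of the square root is genuinely $O(k^{-4})$ — hence contributes only $O(k^{-1})$ after multiplication by $k^3$ — in order to obtain true convergence of the first difference rather than mere boundedness. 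Everything else reduces to the same two-line computation already carried out for Lemma~\ref{l34}.
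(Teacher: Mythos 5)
Your strategy is essentially the paper's: reduce to $k\to+\infty$ by the oddness $\omega_{-k}^-=-\omega_k^-$ and extract the asymptotics of $\omega_k^-$ from \eqref{33}; the only methodological difference is that you Taylor-expand the square root while the paper rationalizes (multiplies by the conjugate) to obtain the identity \eqref{39}. Both routes are legitimate, your sign analysis of $B$ via $(c+1)^2-\bigl(4acd+(c-1)^2\bigr)=4c(1-ad)$ is correct, and your remark that the remainder of the expansion must be $O(k^{-4})$ (so that $k^3\cdot O(k^{-4})=O(k^{-1})$) is exactly the point that turns the $ad=1$ case into a genuine limit; it holds because $4acd>0$ keeps $t\mapsto\sqrt{4acd+((c-1)+t)^2}$ smooth near $t=0$.

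The problem is the last step of your $ad=1$ case. Your own formula gives
\begin{equation*}
2c\beta=-r\Bigl(1+\frac{c-1}{c+1}\Bigr)=-\frac{2cr}{c+1},
\qquad\text{i.e.}\qquad
\beta=\frac{-r}{c+1},
\end{equation*}
which is \emph{not} the value $\frac{-r}{c(c+1)}$ recorded in the statement unless $c=1$; asserting that it ``is exactly the value recorded'' papers over a factor of $c$. In fact your $\beta$ is the correct limit: the paper's own identity \eqref{39} gives, for $ad=1$, $2ck^{-3}\omega_k^-=\frac{-4crk^{-2}}{2(c+1)}+O(k^{-4})$, hence $\omega_k^-=\frac{-r}{c+1}k+O(k^{-1})$, and a direct check with $a=d=r=1$, $c=2$ yields $\omega_{k+1}^--\omega_k^-\to-\frac13=\frac{-r}{c+1}$ rather than $-\frac16$. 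So the discrepancy lies in the statement (and in the paper's final displayed line, which does not follow from its own \eqref{39}); the correct constant is $\frac{-r}{c+1}$, and correspondingly the critical length $2\pi c(c+1)/r$ appearing in Theorems \ref{t36}, \ref{t41}, \ref{t62} and \ref{t64} should read $2\pi(c+1)/r$. Either way, you cannot claim agreement with a constant that your computation does not produce; as written, your proof does not establish the statement in the $ad=1$ case.
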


\begin{proof}
Since $\omega_{-k}^-=-\omega_k^-$, it suffices to consider the case $k\to\infty$.
We have
\begin{equation}\label{39}
\begin{split}
2ck^{-3}\omega_k^-
&=c+1-rk^{-2}-\sqrt{4acd+[c-1+rk^{-2}]^2}\\
&=\frac{[c+1-rk^{-2}]^2-4acd-[c-1+rk^{-2}]^2}{c+1-rk^{-2}+\sqrt{4acd+[c-1+rk^{-2}]^2}}\\
&=\frac{2c[2-2rk^{-2}]-4acd}{c+1-rk^{-2}+\sqrt{4acd+[c-1+rk^{-2}]^2}}\\
&=\frac{4c(1-ad)-4crk^{-2}}{c+1-rk^{-2}+\sqrt{4acd+[c-1+rk^{-2}]^2}}.
\end{split}
\end{equation}

If $ad\ne 1$, then the last expression converges to the non-zero number
\begin{equation*}
B:=\frac{4c(1-ad)}{c+1+\sqrt{4acd+(c-1)^2}}
\end{equation*}
as $k\to\pm\infty$.
(It may be shown by a direct computation, although we do not need this in the present proof, that the denominators in \eqref{39} are non-zero for every $k$.)
Hence
\begin{equation*}
\omega_k^-=\frac{B}{2c}k^3+O(k),
\end{equation*}
and therefore
\begin{equation*}
\omega_{k+1}^--\omega_k^-
=\frac{B}{2c}\left[(k+1)^3-k^3\right]+O(k)
=\frac{3B}{2c}k^2+O(k),\quad k\to\pm\infty.
\end{equation*}
This implies the first two cases of the lemma because $B$ and $1-ad$ have the same sign.

If $ad=1$, then \eqref{39} implies the relation
\begin{equation*}
\omega_k^-=\frac{-r}{c+1}k+O(k^{-1})
\end{equation*}
as $k\to\pm\infty$.
Hence
\begin{equation*}
\omega_{k+1}^--\omega_k^-
\to \frac{-r}{c+1},\quad k\to\pm\infty.\qedhere
\end{equation*}
\end{proof}

\begin{corollary}\label{c36}
We have 
\begin{equation*}
D^+(\set{\omega_k^+})=0,
\qtq{and}
D^+(\set{\omega_k^-})=
\begin{cases}
0&\text{if $ad\ne 1$,}\\
\frac{c+1}{r}&\text{if $ad=1$.}
\end{cases} 
\end{equation*}
\end{corollary}

\begin{proof}
By using the definition of the upper density, this follows from Lemmas \ref{l34} and \ref{l35}.
\end{proof}

The rest of this section is devoted to the proof of the following

\begin{theorem}\label{t37}
Assume that
\begin{equation}\label{310}
\omega_k^+\ne\omega_n^+
\qtq{and}
\omega_k^-\ne\omega_n^-
\qtq{whenever}
k\ne n.
\end{equation}
Fix $x_0\in\TT$ arbitrarily, and consider the solutions of \eqref{31}.

\begin{enumerate}[\upshape (i)]
\item The direct inequality
\begin{equation*}
\int_I\abs{u(t,x_0)}^2+\abs{v(t,x_0)}^2\ dt\ll E
\end{equation*}
holds for all non-degenerate bounded intervals $I$.
\item If $ad\ne 1$, then the inverse inequality
\begin{equation}\label{311}
E\ll\int_I\abs{u(t,x_0)}^2+\abs{v(t,x_0)}^2\ dt\end{equation}
also holds for all non-degenerate bounded intervals $I$.
\item If $ad=1$, then  \eqref{311} holds  for all bounded intervals $I$ of length $\abs{I}>2\pi (c+1)/r$, and it fails if $\abs{I}<2\pi (c+1)/r$.
\end{enumerate}
\end{theorem}

\noindent Here we use the notation $\ll$ introduced in Remark \ref{r22}:  two sides of the direct and inverse inequalities may be considered as real-valued functions of the initial data $(u_0,v_0)$ of the problem \eqref{31}.)

\begin{remark}\label{r38}
The assumption \eqref{310} is will not be needed in the proof of the direct inequality.
We will show in Lemma \ref{l39} below that \eqref{310} is satisfied for  almost all $(a,c,d,r)\in(0,\infty)^4$.
\end{remark}

We proceed in several steps.

\begin{proof}[Proof of the direct inequality in Theorem \ref{t37}]
Fix an arbitrary bounded interval $I$ and $x_0\in\TT$.
Writing
\begin{equation}\label{312}
Z_k^+=
\begin{pmatrix}
z_{k,1}^+\\z_{k,2}^+
\end{pmatrix}
\qtq{and}
Z_k^-=
\begin{pmatrix}
z_{k,1}^-\\z_{k,2}^-
\end{pmatrix}
\end{equation}
for brevity, using the Young inequality,  applying Theorem \ref{t21} (ii), and finally using the relation $\norm{Z_k^{\pm}}\asymp 1$ from Lemma \ref{l32}, we have
\begin{align*}
\int_I&\abs{u(t,x_0)}^2+\abs{v(t,x_0)}^2\ dt
=\int_I\Bigl\lVert\sum_{k\in\ZZ} e^{-ikx_0}\left[c_k^+e^{i\omega_k^+t}Z_k^++c_k^-e^{i\omega_k^-t}Z_k^-\right]\Bigr\rVert^2\ dt\\
&\le 2\int_I\Bigl\lVert\sum_{k\in\ZZ} e^{-ikx_0}c_k^+e^{i\omega_k^+t}Z_k^+\Bigr\rVert^2\ dt
+2\int_I\Bigl\lVert\sum_{k\in\ZZ} e^{-ikx_0}c_k^-e^{i\omega_k^-t}Z_k^-\Bigr\rVert^2\ dt\\
&=2\int_I\Bigl\lVert\sum_{k\in\ZZ} e^{-ikx_0}c_k^+e^{i\omega_k^+t}z_{k,1}^+\Bigr\rVert^2\ dt
+2\int_I\Bigl\lVert\sum_{k\in\ZZ} e^{-ikx_0}c_k^+e^{i\omega_k^+t}z_{k,2}^+\Bigr\rVert^2\ dt\\
&\qquad\qquad +2\int_I\Bigl\lVert\sum_{k\in\ZZ} e^{-ikx_0}c_k^-e^{i\omega_k^-t}z_{k,1}^-\Bigr\rVert^2\ dt
+2\int_I\Bigl\lVert\sum_{k\in\ZZ} e^{-ikx_0}c_k^-e^{i\omega_k^-t}z_{k,2}^-\Bigr\rVert^2\ dt\\
&\ll \sum_{k\in\ZZ}\sum_{j=1}^2\abs{e^{-ikx_0}c_k^+z_{k,j}^+}^2+\abs{e^{-ikx_0}c_k^-z_{k,j}^-}^2\\
&=\sum_{k\in\ZZ}\abs{c_k^+}^2\norm{Z_k^+}^2+\abs{c_k^-}^2\norm{Z_k^-}^2\\
&\asymp \sum_{k\in\ZZ}\abs{c_k^+}^2+\abs{c_k^-}^2.
\end{align*}
Using \eqref{38} we conclude that
\begin{equation*}
\int_I\abs{u(t,x_0)}^2+\abs{v(t,x_0)}^2\ dt
\ll E.\qedhere
\end{equation*}
\end{proof}

\begin{proof}[Proof of a weakened version of the inverse inequality in Theorem \ref{t37}]
We fix a positive number $K$ whose value will be precised later, and we consider only solutions of the form \eqref{36} satisfying $c_k^{\pm}=0$ whenever $\abs{k}\le K$, i.e., functions of the form
\begin{equation}\label{313}
\begin{pmatrix}
u\\ v
\end{pmatrix}
(t,x)=\sum_{\substack{k\in\ZZ\\ \abs{k}>K}}\left(c_k^+e^{i\omega_k^+t}Z_k^++c_k^-e^{i\omega_k^-t}Z_k^-\right)e^{-ikx}
\end{equation}
with square summable complex coefficients $c_k^{\pm}$.
Using the Young inequality and applying Theorem \ref{t25} we have
\begin{align*}
\int_I\abs{u(t,x_0)}^2+&\abs{v(t,x_0)}^2\ dt
=\int_I\Bigl\lVert\sum_{\substack{k\in\ZZ\\ \abs{k}>K}} e^{-ikx_0}\left[c_k^+e^{i\omega_k^+t}Z_k^++c_k^-e^{i\omega_k^-t}Z_k^-\right]\Bigr\rVert^2\ dt\\
&\ge \frac{1}{2}\int_I\Bigl\lVert\sum_{\substack{k\in\ZZ\\ \abs{k}>K}} e^{-ikx_0}\left[c_k^+e^{i\omega_k^+t}Z^++c_k^-e^{i\omega_k^-t}Z^-\right]\Bigr\rVert^2\ dt \\
&\qquad\qquad -\int_I\Bigl\lVert\sum_{\substack{k\in\ZZ\\ \abs{k}>K}} e^{-ikx_0}\left[c_k^+e^{i\omega_k^+t}(Z^+-Z_k^+)+c_k^-e^{i\omega_k^-t}(Z^--Z_k^-)\right]\Bigr\rVert^2\ dt\\
&\ge \frac{1}{2}\int_I\Bigl\lVert\sum_{\substack{k\in\ZZ\\ \abs{k}>K}} e^{-ikx_0}\left[c_k^+e^{i\omega_k^+t}Z^++c_k^-e^{i\omega_k^-t}Z^-\right]\Bigr\rVert^2\ dt \\
&\qquad\qquad -\alpha(I,K)\delta_K^2\sum_{\substack{k\in\ZZ\\ \abs{k}>K}}\left(\abs{c_k^+}^2+\abs{c_k^-}^2\right)
\end{align*}
with
\begin{equation*}
\delta_K:=\sup\set{\norm{Z^+-Z_k^+}, \norm{Z^--Z_k^-}\ :\ \abs{k}>K}.
\end{equation*}

Since $Z^+$ and $Z^-$ are orthogonal, and  $\norm{Z^{\pm}}\ge 2ac$ by Lemma \ref{l32}, it follows that
\begin{align*}
\int_I\abs{u(t,x_0)}^2&+\abs{v(t,x_0)}^2\ dt\\
&\ge \frac{1}{2}\int_I\Bigl\lVert\sum_{\substack{k\in\ZZ\\ \abs{k}>K}} e^{-ikx_0}c_k^+e^{i\omega_k^+t}Z^+\Bigr\rVert^2+\Bigl\lVert\sum_{\substack{k\in\ZZ\\ \abs{k}>K}} e^{-ikx_0}c_k^-e^{i\omega_k^-t}Z^-\Bigr\rVert^2\ dt \\
&\qquad\qquad -\alpha\delta_K^2\sum_{\substack{k\in\ZZ\\ \abs{k}>K}}\left(\abs{c_k^+}^2+\abs{c_k^-}^2\right)\\
&\ge 2a^2c^2\int_I\Bigl\lVert\sum_{\substack{k\in\ZZ\\ \abs{k}>K}} e^{-ikx_0}c_k^+e^{i\omega_k^+t}\Bigr\rVert^2+\Bigl\lVert\sum_{\substack{k\in\ZZ\\ \abs{k}>K}} e^{-ikx_0}c_k^-e^{i\omega_k^-t}\Bigr\rVert^2\ dt \\
&\qquad\qquad -\alpha(I,K)\delta_K^2\sum_{\substack{k\in\ZZ\\ \abs{k}>K}}\left(\abs{c_k^+}^2+\abs{c_k^-}^2\right).
\end{align*}

Assume that $ad\ne 1$.
If $K$ is sufficiently large, say $K\ge K_0$, then by Corollary \ref{c36} we may apply Theorem \ref{21} (iii) to conclude with some constant $\beta(I,K)>0$ the estimate
\begin{align*}
\int_I\abs{u(t,x_0)}^2+\abs{v(t,x_0)}^2dt&\ge \beta(I,K)\sum_{\substack{k\in\ZZ\\ \abs{k}>K}} \left(\abs{e^{-ikx_0}c_k^+}^2+\abs{e^{-ikx_0}c_k^-}^2\right)\\
&\qquad\qquad 
-\alpha(I,K)\delta_K^2\sum_{\substack{k\in\ZZ\\ \abs{k}>K}}\left(\abs{c_k^+}^2+\abs{c_k^-}^2\right)\\
&=\left(\beta(I,K)-\alpha(I,K)\delta_K^2\right)\sum_{\substack{k\in\ZZ\\ \abs{k}>K}}\left(\abs{c_k^+}^2+\abs{c_k^-}^2\right).
\end{align*}
Let us observe that if $K\ge K_0$, then $\beta(I,K)\ge\beta(I,K_0)$ $\alpha(I,K)\le\alpha(I,K_0)$ because we have less complex families to consider for $K$ than for $K_0$.
Therefore we have
\begin{equation*}
\int_I\abs{u(t,x_0)}^2+\abs{v(t,x_0)}^2dt
\ge\left(\beta(I,K_0)-\alpha(I,K_0)\delta_K^2\right)\sum_{\substack{k\in\ZZ\\ \abs{k}>K}}\left(\abs{c_k^+}^2+\abs{c_k^-}^2\right)
\end{equation*}
for every $K\ge K_0$ and for all complex families $(c_k^{\pm})_{\abs{k}>K}$.

Since $\delta_K\to 0$ as $K\to\infty$, we may choose a sufficiently large $K\ge K_0$ such that $\beta(I,K_0)-\alpha(I,K_0)\delta_K^2>0$.
Then using \eqref{38} we conclude that under the assumption $ad\ne 1$ the inverse inequality holds for all  functions of the form \eqref{313}.

If $ad=1$, then by Corollary \ref{c36} we may repeat the last reasoning for every bounded interval of length
\begin{equation*}
\abs{I}>\frac{2\pi (c+1)}{r}.\qedhere
\end{equation*}
\end{proof}

\begin{proof}[Proof of the inverse inequality in Theorem \ref{t37}]
Thanks to our assumption \eqref{310} and to  Corollary \ref{c36} we may apply Theorem \ref{t26} to infer the inverse inequality from the weakened inverse inequality, established in the preceding step.
\end{proof}

\begin{proof}[Proof of the lack of the inverse inequality if $ad=1$ and $\abs{I}<\frac{2\pi (c+1)}{r}$]
For any fixed positive integer $K$, by Corollary \ref{c36} and Remark \ref{r24} (i) we may find square summable complex numbers $c_k^-$ satisfying the inequality
\begin{equation*}
\int_I\abs{\sum_{k\in\ZZ}c_k^-e^{-ikx_0}e^{i\omega_k^-t}}^2\ dt<\frac{1}{K}\sum_{k\in\ZZ}\abs{c_k^-}^2.
\end{equation*}
Moreover, since the removal of finitely many frequencies does not alter the upper density, we may also assume that
\begin{equation*}
c_k^-=0\qtq{whenever}\abs{k}\le K.
\end{equation*}
Then, applying the Young inequality and then Theorem \ref{t25} (ii) with $J=1$ and 
\begin{equation*}
\delta_K:=\sup\set{\norm{Z_k^--Z^-}\ :\ \abs{k}>K},
\end{equation*}
we have the following estimate:
\begin{align*}
\int_I\norm{\sum_{k\in\ZZ}c_k^-e^{i\omega_k^-t}Z_k^-e^{-ikx_0}}^2\ dt
&\le 2\int_I\norm{\sum_{k\in\ZZ}c_k^-e^{i\omega_k^-t}Z^-e^{-ikx_0}}^2\ dt\\
&\qquad +2\int_I\norm{\sum_{k\in\ZZ}c_k^-e^{i\omega_k^-t}(Z_k^--Z^-)e^{-ikx_0}}^2\ dt\\
&\le \left(\frac{2\norm{Z^-}^2}{K}+2\alpha\delta_K^2\right)\sum_{k\in\ZZ}\abs{c_k^-}^2.
\end{align*}

Since the expression in front of the sum tends to zero as $K\to\infty$, this estimate shows that the inverse inequality \eqref{311} does not hold. 
\end{proof}

Theorem \ref{t12} (iv) now follows from Theorem \ref{t37} and from the following lemma, showing that the hypothesis (3.10)
holds for  almost all choices of the parameters $a,c,d,r$:

\begin{lemma}\label{l39}
For almost every quadruple $(a,c,d,r)\in(0,\infty)^4$,
we have $\omega_k^+\ne\omega_n^+$ and $\omega_k^-\ne\omega_n^-$ whenever $k\ne n$.
\end{lemma}

\begin{proof}
We recall that $\omega_0^+=\omega_0^-=0$.
If $k\ne 0$, and $\omega_k^+=0$ or $\omega_k^-=0$, then we infer from the determinant defining $\omega_k^{\pm}$ that $(1-ad)k^6-rk^4=0$, i.e., $r=(1-ad)k^2$.
Hence for any fixed $(a,d)\in(0,\infty)^2$ there are at most countably many values of $r>0$ such that $\omega_k^+=0$ or $\omega_k^-=0$ for some $k\in\ZZ^*$.
Applying Fubini's theorem hence we infer that for almost every $(a,c,d,r)\in(0,\infty)^4$, we have $\omega_k^+\ne \omega_0^+$ and $\omega_k^-\ne \omega_0^-$ for all $k\in\ZZ^*$.

Henceforth we consider only nonzero indices $n$ and $k$.
Setting
\begin{equation*}
I_k:=4acdk^4+[r+(c-1)k^2]^2
=r^2+(2c-2)k^2r+[(c-1)^2+4acd]k^4
\end{equation*}
for brevity, in case $\omega_k^+=\omega_n^+$  we deduce from \eqref{33} that
\begin{equation}\label{314}
(c+1)(k^3-n^3)-(k-n)r
=n\sqrt{I_n}-k\sqrt{I_k}.
\end{equation}
Taking the square of both sides hence
we get
\begin{equation}\label{315}
nk\sqrt{I_n}\sqrt{I_k}
=nkr^2
+\left[2c(n^4+k^4)-(c+1)nk(n^2+k^2)\right]r+\alpha_1
\end{equation}
with a constant $\alpha_1$ not depending on $r$.
Taking its square again, we get after some simplification the equation
\begin{equation}\label{316}
2c(n-k)(n^3-k^3)r^3+\alpha_2r^2+\alpha_3r+\alpha_4=0
\end{equation}
with suitable constants $\alpha_2, \alpha_3, \alpha_4$, independent of $r$.

Similarly, in case $\omega_k^-=\omega_n^-$  we deduce from \eqref{33} instead of \eqref{314} the equality
\begin{equation*}
(c+1)(k^3-n^3)-(k-n)r
=k\sqrt{I_k}-n\sqrt{I_n},
\end{equation*}
and then the same equalities \eqref{315} and \eqref{316}.

For any fixed $(a,c,d)\in(0,\infty)^3$ and for any $(k,n)\in \ZZ^2$ with $k\ne n$ the polynomial equation \eqref{316} vanishes for at most three values of $r$.
Therefore for any fixed  $(a,c,d)\in(0,\infty)^3$, $\omega_k^+\ne\omega_n^+$ and $\omega_k^-\ne\omega_n^-$ for all $k\ne n$ for all but countable many values of $r$.
The lemma follows by applying Fubini's theorem.
\end{proof}

\section{Pointwise controllability}\label{s4}

In this section we study the pointwise controllability of the  system
\begin{equation}\label{41}
\begin{cases}
u_t+u_{xxx}+av_{xxx}=f(t)\delta_{x_0}\qtq{in}\RR\times\TT,\\
v_t+\frac{r}{c}v_x+\frac{1}{c}v_{xxx}+\frac{d}{c}u_{xxx}=g(t)\delta_{x_0}\qtq{in}\RR\times\TT,\\
u(0)=u_0\qtq{and}v(0)=v_0,\end{cases}
\end{equation}
where $a,c,d,r$ are given positive constants, $\delta_{x_0}$ denotes the Dirac delta function centered in a given point $x_0\in\TT$, and $f,g$ are the control functions.
We will prove the following
\begin{theorem}\label{t41}
Fix $x_0\in\TT$ arbitrarily, and choose $a,c,d,r$ such that  \eqref{310} be satisfied. Set
\begin{equation*}
T_0:=
\begin{cases}
0&\text{if $ad\ne 1$,}\\
2\pi (c+1)/r&\text{if $ad=1$.}
\end{cases}
\end{equation*}

Given $T>T_0$ arbitrarily, for every  $(u_0,v_0), (u_T,v_T)\in H$ there exist control functions $f,g\in L^2_{\text{loc}}(\RR)$ such that the solution of \eqref{41} satisfies the final conditions
\begin{equation*}
u(T)=u_T\qtq{and} v(T)=v_T.
\end{equation*}
\end{theorem}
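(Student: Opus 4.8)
The plan is to apply the Hilbert Uniqueness Method of Lions, with Theorem~\ref{t36} supplying the analytic input. Since $A$ generates a group $S(\cdot)$ of isometries on $H$ (Theorem~\ref{t33}), the solution of \eqref{41} at time $T$ splits as $S(T)(u_0,v_0)$ plus the contribution of the controls starting from rest. Hence it suffices to show that for every $T>T_0$ and every $W\in H$ one can choose $f,g\in L^2(0,T)$ so that the solution of \eqref{41} with $(u_0,v_0)=0$ equals $W$ at time $T$; the general statement then follows with $W:=(u_T,v_T)-S(T)(u_0,v_0)$.

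First I would fix the meaning of \eqref{41} by transposition. For $\Phi_T\in H$ let $\Phi=(\varphi,\psi)$ be the solution of the homogeneous system \eqref{31} with $\Phi(T)=\Phi_T$; by Theorem~\ref{t33} this is well defined in $C_b(\RR,H)$, and by the direct inequality of Theorem~\ref{t36}~(i) the traces $\varphi(\cdot,x_0),\psi(\cdot,x_0)$ lie in $L^2(0,T)$ with
\begin{equation*}
\int_0^T\abs{\varphi(t,x_0)}^2+\abs{\psi(t,x_0)}^2\ dt\ll\norm{\Phi_T}_H^2.
\end{equation*}
Given $f,g\in L^2(0,T)$, I define the solution $Z=(u,v)\in C([0,T],H)$ of \eqref{41} with zero initial data to be the unique function satisfying, for every $t\in[0,T]$ and every $\Phi_T\in H$,
\begin{equation*}
(Z(t),\Phi(t))_H=\int_0^t f(s)\,\overline{\varphi(s,x_0)}+\tfrac{ac}{d}\,g(s)\,\overline{\psi(s,x_0)}\ ds ,
\end{equation*}
which is the identity obtained by formally multiplying \eqref{41} by $\Phi$ and integrating, using $(AZ,\Phi)_H=-(Z,A\Phi)_H$ (established in the proof of Proposition~\ref{p31} without assuming $ad\ne1$, so that \eqref{31} is its own dual for this pairing). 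The displayed bound, together with the fact that $S(\cdot)$ is a group of isometries, shows that the right-hand side is a bounded linear functional of $\Phi(t)$, so $Z(t)\in H$ is well defined and $\norm{Z(t)}_H\ll\norm{f}_{L^2(0,T)}+\norm{g}_{L^2(0,T)}$.

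Next I would introduce the HUM operator $\Lambda\colon H\to H$ as follows: given $\Phi_T\in H$, solve \eqref{31} backwards from $T$ as above, take the controls
\begin{equation*}
f(t):=\overline{\varphi(t,x_0)},\qquad g(t):=\tfrac{d}{ac}\,\overline{\psi(t,x_0)},
\end{equation*}
let $Z$ be the corresponding transposition solution of \eqref{41} with zero initial data, and put $\Lambda\Phi_T:=Z(T)$. Evaluating the transposition identity at $t=T$ for this very $\Phi$ gives
\begin{equation*}
(\Lambda\Phi_T,\Phi_T)_H=\int_0^T\abs{\varphi(t,x_0)}^2+\abs{\psi(t,x_0)}^2\ dt .
\end{equation*}
This quadratic form is bounded by the direct inequality, so $\Lambda$ is a bounded self-adjoint operator; and since $T>T_0$, the inverse inequality of Theorem~\ref{t36}~(ii)--(iii) applied to the interval $I=(0,T)$ yields $\norm{\Phi_T}_H^2\ll(\Lambda\Phi_T,\Phi_T)_H$, i.e. $\Lambda$ is coercive. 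Hence $\Lambda$ is an isomorphism of $H$. Given the target $W$, solving $\Lambda\Phi_T=W$ and extending the associated controls by zero outside $(0,T)$ produces $f,g\in L^2_{\text{loc}}(\RR)$ for which the solution of \eqref{41} with zero initial data reaches $W$ at time $T$; combined with the first reduction this proves the theorem, the constraint $T>T_0$ being exactly what the inverse inequality demands.

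The genuinely nontrivial content — the two-sided pointwise estimate, and in particular the coercivity for $T>T_0$ — is already in hand as Theorem~\ref{t36}. The remaining work is therefore bookkeeping: checking that the transposition solution of \eqref{41} coincides with the PDE solution (a routine density and integration-by-parts argument, legitimate because $\varphi(\cdot,x_0),\psi(\cdot,x_0)\in L^2$), and keeping track of the weight $\tfrac{ac}{d}$ coming from the Euclidean structure of $H$ and of the time reversal implicit in prescribing the dual data at time $T$. This is the step where one must be careful, but it poses no real difficulty.
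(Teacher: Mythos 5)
Your proposal is correct and follows essentially the same route as the paper: HUM with solutions of the controlled system defined by transposition against the homogeneous adjoint flow, the quadratic identity $(\Lambda\Phi,\Phi)_H=\int_0^T\abs{\varphi(t,x_0)}^2+\abs{\psi(t,x_0)}^2\,dt$, coercivity from the inverse inequality of Theorem \ref{t36} for $T>T_0$, and the Lax--Milgram theorem to conclude that $\Lambda$ is onto. The only differences are bookkeeping conventions: the paper reduces to null controllability and runs the adjoint system \eqref{42} forward from $t=0$ with an unweighted pairing, whereas you reduce to controllability from rest, prescribe the adjoint data at $t=T$, and exploit the anti-adjointness of $A$ in the weighted inner product, compensating with the factor $d/(ac)$ in $g$ — all equivalent.
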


\begin{remark}\label{r42}
In case $ad=1$ the system is not controllable for any $T<T_0$.
This follows from Theorem \ref{t37} and from a general theorem of duality between observability and controllability; see, e.g., \cite{DolRus1977}.
\end{remark}

By a general argument of control theory, it suffices to consider the special case of null controllability, i.e., the case where $u_T=v_T=0$.
We prove this  by applying the Hilbert Uniqueness Method (HUM) of  J.-L. Lions \cite{Lions1988} as follows.
Fix $(u_0,v_0)\in H$ arbitrarily.
Choose $(\varphi_0,\psi_0)\in D(A)$
arbitrarily, and solve the homogeneous adjoint system
\begin{equation}\label{42}
\begin{cases}
\varphi_t + \varphi_{xxx} + \frac{d}{c}\psi_{xxx}=0  \qtq{in}\RR\times\TT,\\
\psi_t  +\frac{r}{c}\psi_x+a\varphi_{xxx} +\frac{1}{c}\psi_{xxx} =0  \qtq{in}\RR\times\TT,\\
\varphi(0)=\varphi_0\qtq{and}\psi(0)=\psi_0.
\end{cases}
\end{equation}

Then solve the non-homogeneous problem with zero final states (instead of the non-zero initial states  in \eqref{41}) by applying the controls
\begin{equation}\label{43}
f:=-\varphi(\cdot,x_0)\qtq{and}g:=-\psi(\cdot,x_0):
\end{equation}
\begin{equation}\label{44}
\begin{cases}
u_t+u_{xxx}+av_{xxx}=-\varphi(\cdot,x_0)\delta_{x_0}\qtq{in}\RR\times\TT,\\
v_t+\frac{r}{c}v_x+\frac{1}{c}v_{xxx}+\frac{d}{c}u_{xxx}=-\psi(\cdot,x_0)\delta_{x_0}\qtq{in}\RR\times\TT,\\
u(T)=v(T)=0.
\end{cases}
\end{equation}
If $(u(0),v(0))$ happens to be equal to $(u_0,v_0)$, then the solution of \eqref{41} with the controls \eqref{43} will satisfy the final conditions of the theorem by the uniqueness (to be proven below) of the solutions of \eqref{41} and \eqref{44}.

Therefore the theorem will be completed by showing that the range of the map
\begin{equation*}
\Lambda(\varphi_0,\psi_0):=(u(0),v(0))
\end{equation*}
contains $H$.

Now we make this approach precise.
We start by defining the solutions of \eqref{41}.
We begin with a formal computation.
If $(u,v)$ solves \eqref{41} and $(\varphi,\psi)$ solves \eqref{42}, then we obtain for each fixed $T\in\RR$ the following equalities by integration by parts:
\begin{align*}
0
&=\int_0^T\int_{\TT} u\left(\varphi_t+\varphi_{xxx}+\frac{d}{c}\psi_{xxx}\right)\ dx\ dt \\
&\qquad\qquad+\int_0^T\int_{\TT}v\left(\psi_t+\frac{r}{c}\psi_x+a\varphi_{xxx}+\frac{1}{c}\psi_{xxx}\right)\ dx\ dt \\
&=\left[\int_{\TT}u\varphi+v\psi\ dx\right]_0^T-\int_0^T\int_{\TT} \left(u_t+u_{xxx}+av_{xxx}\right)\varphi\ dx\ dt \\
&\qquad\qquad-\int_0^T\int_{\TT} \left(v_t+\frac{r}{c}v_x+\frac{1}{c}v_{xxx}+\frac{d}{c}u_{xxx}\right)\psi\ dx\ dt \\
&=\left[\int_{\TT}u\varphi+v\psi\ dx\right]_0^T
-\int_0^Tf(t)\varphi(t,x_0)+g(t)\psi(t,x_0)\ dt.
\end{align*}
This may be rewritten in the form
\begin{multline}\label{45}
\left((u(T),v(T)),(\varphi(T),\psi(T))\right)_H\\
=\left((u_0,v_0),(\varphi_0,\psi_0)\right)_H
+\left((f,g),(\varphi(\cdot,x_0),\psi(\cdot,x_0))\right)_{G_T},
\end{multline}
where $G_T$ denotes the Hilbert space $L^2(0,T)\times L^2(0,T)$ for $T>0$ and $L^2(T,0)\times L^2(T,0)$ for $T<0$.
This identity leads to the following definition:

\begin{definition}
By a solution of \eqref{41} we mean a function $(u,v)\in C_w(\RR,H)$ satisfying \eqref{45} for all $T\in\RR$ and $(\varphi_0,\psi_0)\in H$.
\end{definition}

The subscript $w$ indicates that $H$ is endowed here with the weak topology.
The definition is justified by the following

\begin{theorem}\label{t43}
Given any initial data $(u_0,v_0)\in H$ and control functions $f,g\in L^2_{\text{loc}}(\RR)$, the system \eqref{41} has a unique solution, and the linear map
\begin{equation*}
(u_0,v_0,f,g)\mapsto (u,v)
\end{equation*}
is continuous for the indicated topologies.
\end{theorem}

\begin{remark}\label{r44}
By the time invariance of the system \eqref{41} the theorem remains valid if we impose final conditions instead of initial conditions; hence it also proves the well posedness of  \eqref{44}.
\end{remark}

\begin{proof}
Denoting the right hand side of \eqref{45} by $L_T(\varphi_0,\psi_0)$ for each fixed $T\in\RR$, it follows from Theorem \ref{t33} and the direct inequality in Theorem \ref{t37} that $L_T$ is a continuous linear functional of $(\varphi_0,\psi_0)$. The same argument ensures the  and $L_T$ is a continuous linear functional of $(\varphi(T),\psi(T))$.
This proves the existence of a unique couple $(u(T),v(T))\in H$ satisfying  \eqref{45}.

The weak continuity of the solution  follows by observing that for any fixed $(\varphi_0,\psi_0)\in H$ the right hand side of \eqref{45} is continuous in $T$ by the continuity of the primitives of Lebesgue integrable real functions.

Using Theorem \ref{t33} again, we may also infer from \eqref{45} for each $T>0$ the estimates
\begin{equation*}
\norm{(u,v)}_{L^{\infty}(-T,T;H)}\le \set{\norm{(u_0,v_0)}_H+\norm{(f,g)}_{L^2(-T,T)}}
\end{equation*}
for all $T>0$; this proves the continuous dependence of the solution on the data.
\end{proof}

\begin{remark}\label{r45}
Since we have only used the direct inequality in Theorem \ref{t37}, Theorem \ref{t43} holds without the assumption \eqref{310} on the eigenvalues.
\end{remark}

Now we turn back to the proof of Theorem \ref{t41}.
It remains to show that the range of the map $\Lambda$ contains $H$.
Indeed, it is a continuous linear map $\Lambda:H\to H$ by Theorems \ref{t33} and \ref{t43}.
Furthermore, we infer from \eqref{45} that
\begin{equation*}
(\Lambda(\varphi_0,\psi_0),(\varphi_0,\psi_0))_H
=\int_0^T\abs{\varphi(t,x_0)}^2+\abs{\psi(t,x_0)}^2\ dt,
\end{equation*}
and then from Theorem \ref{t37} that
\begin{equation*}
(\Lambda(\varphi_0,\psi_0),(\varphi_0,\psi_0))_H
\ge \varepsilon\norm{(\varphi_0,\psi_0)}_H^2
\end{equation*}
for all $(\varphi_0,\psi_0)\in H$ with a suitable positive constant $\varepsilon$.

Applying the Lax--Milgram Theorem we conclude that $\Lambda$ is an isomorphism of $H$ onto itself; in particular, it is onto.

\section{Pointwise stabilizability}\label{s5} 

Following \cite[Chapter 2]{KomLor2005} first we recall some general abstract results.
Consider the evolutionary problem
\begin{equation}\label{51}
U'=\AA U,\quad U(0)=U_0
\end{equation}
(we use  the notation $U'$ for the time derivative of $U$), where
\begin{itemize}
\item[(i)] $\AA$ is a skew-adjoint linear operator in a Hilbert space $\HH$, having a compact resolvent.
\end{itemize}
Then $\AA$ generates a strongly continuous group of automorphisms $e^{t\AA}$ in $\HH$, and for each $U_0\in \HH $ the problem \eqref{51} has a unique
continuous solution $U:\RR\to\HH$, satisfying the equality
\begin{equation*}
\norm{U(t)}=\norm{U_0}\qtq{for all}t\in\RR.
\end{equation*}

Now let $\BB$ be another linear operator, defined on some linear subspace $D(\BB)$ of $\HH$ with  values in another Hilbert space $\GG$, satisfying the following two hypotheses:
\begin{itemize}
\item[(ii)]$D(\AA )\subset D(\BB )$, and there exists a positive constant $c$ such that
\begin{equation*}
\norm{\BB U_0}_{\GG}\le c\norm{\AA U_0}_{\HH}\qtq{for all}U_0\in D(\AA );
\end{equation*}
\item[(iii)] there exist a non-degenerate bounded interval $I$ and a positive constant $c_I$ such that the solutions of
\eqref{51} satisfy the inequalities
\begin{equation*}
\norm{\BB U}_{L^2(I;\GG)} \le c_I\norm{U_0}_{\HH}\qtq{for all}U_0\in D(\AA ).
\end{equation*}
\end{itemize}
The operator $\BB$ is usually called an {\em observability} operator: We may think that we  can observe only $\BB U$ and not the whole solution $U$.

Under the assumptions (i), (ii), (iii) we may define the solutions of the {\em
dual} problem
\begin{equation}
V'=-\AA^* V+\BB^* W,\quad V(0)=V_0,\label{52}
\end{equation}
by transposition, were $\HH'$,  $\GG'$ denote the
dual spaces of $\HH$,  $\GG$,
and $\AA^*$, $\BB^*$ denote the adjoints of $\AA$ and $\BB$.

The operator $\BB^*$ is usually called a {\em controllability} operator: we may think that we  can act on the system by choosing a
{\em control} $W$.

Motivated by a formal computation, by a solution of \eqref{52} we mean a
function  $V:\RR\to \HH'$
satisfying the identity
\begin{equation*}
\langle V(S),U (S)\rangle_{\HH',\HH}=\langle V_0,U _0\rangle_{\HH',\HH}
+\int_0^s\langle W(t),\BB U(t)\rangle_{\GG',\GG}\ dt
\end{equation*}
for all $U _0\in \HH$ and for all
$s\in  \RR$.
Then for every $V_0\in \HH'$ and $W\in L^2_{\rm loc}(\RR;\GG')$, \eqref{52} has  a
unique solution.
Moreover, the function $V:\RR\to \HH'$ is weakly continuous.

The Hilbert Uniqueness Theorem states that if the inverse inequality of (iii) also holds:
\begin{itemize}
\item[(iv)] there exist a bounded interval $I'$ and a constant $c'$ such that
the solutions of
\eqref{51}
satisfy the inequality
\begin{equation*}
\norm{U_0}_{\HH}\le c'\norm{\BB U}_{L^2(I';\GG)}\qtq{for all}U_0\in D(\AA ),
\end{equation*}
\end{itemize}
then the system \eqref{52} is exactly controllable in the following sense:

\begin{theorem}\label{t51}
Assume (i), (ii), (iii), (iv), and let $T>\abs{I'}$ (the length of $I'$).
Then for all $V_0, V_1\in \HH'$  there exists a function $W\in L^2_{\rm loc}(\RR;\GG')$ such that  the solution of  \eqref{52}
satisfies $V(T)=V_1$.
\end{theorem}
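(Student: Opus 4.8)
The plan is to run the Hilbert Uniqueness Method, using as test functions the solutions $U$ of the observed system \eqref{51}, and to reduce the exact controllability of \eqref{52} to the single null controllability statement that the method produces directly. \emph{Reductions.} Since $T>\abs{I'}$, after a time translation (allowed by the time invariance of \eqref{51}) we may assume $I'\subset(0,T)$. Because the integrand $\norm{\BB U(t)}_{\GG}^2$ is nonnegative, the inverse inequality (iv) then holds with $(0,T)$ in place of $I'$; and, using the isometry $\norm{U(t)}_{\HH}=\norm{U_0}_{\HH}$ together with time invariance, a covering argument upgrades (iii) to the direct inequality on $(0,T)$ as well. Next, splitting a solution of \eqref{52} as $V=P+Q$ with $P'=-\AA^*P$, $P(0)=V_0$ (no control) and $Q(0)=0$, the requirement $V(0)=V_0$, $V(T)=V_1$ amounts to steering $Q$ from rest to $V_1-P(T)$; reversing time turns this into the null controllability of a problem of the same structure, with $\AA^*$ replaced by $-\AA^*$ and $\BB^*$ by its time reversal, and one checks that $-\AA$ is again skew-adjoint with compact resolvent while the observability inequalities for $\dot U=-\AA U$ follow from those for \eqref{51} on the reflected intervals, so hypotheses (i)--(iv) are preserved. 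Hence it suffices to prove: for each $V_0\in\HH'$ there is $W\in L^2_{\mathrm{loc}}(\RR;\GG')$ for which the solution of \eqref{52} with $V(0)=V_0$ satisfies $V(T)=0$.

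To this end, for $\Phi_0\in D(\AA)$ let $\Phi$ solve \eqref{51} with $\Phi(0)=\Phi_0$ (so $\Phi(t)\in D(\AA)\subset D(\BB)$ by (ii)), set $W:=\BB\Phi$ on $(0,T)$ and $W:=0$ elsewhere, identify $\GG$ with $\GG'$ so that $W\in L^2(0,T;\GG')$ by (iii), solve \eqref{52} with this $W$ and the \emph{final} condition $V(T)=0$, and define $\Lambda\Phi_0:=-V(0)\in\HH'$. Applying the transposition identity that defines the solutions of \eqref{52}, taken with $S=T$ and $U=\Phi$, gives
\begin{equation*}
0=\langle V(T),\Phi(T)\rangle=\langle V(0),\Phi_0\rangle+\int_0^T\norm{\BB\Phi(t)}_{\GG}^2\,dt ,
\end{equation*}
and, applying it instead with $U=\Psi$ the solution of \eqref{51} issued from $\Psi_0\in D(\AA)$ but with the same $W$,
\begin{equation*}
\langle\Lambda\Phi_0,\Psi_0\rangle=\int_0^T\langle\BB\Phi(t),\BB\Psi(t)\rangle_{\GG}\,dt .
\end{equation*}
By the direct inequality this bilinear form is bounded on $\HH$, so $\Lambda$ extends to a bounded operator $\HH\to\HH'$; moreover $\langle\Lambda\Phi_0,\Phi_0\rangle=\int_0^T\norm{\BB\Phi}_{\GG}^2\asymp\norm{\Phi_0}_{\HH}^2$, the lower bound being the inverse inequality (first on $D(\AA)$, then on $\HH$ by density), so $\Lambda$ is coercive and symmetric. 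By the Lax--Milgram (or Riesz) theorem $\Lambda:\HH\to\HH'$ is then an isomorphism. Given $V_0\in\HH'$, put $\Phi_0:=\Lambda^{-1}(-V_0)$ and let $W=\BB\Phi$ be the corresponding control; the solution of \eqref{52} built above has $V(0)=-\Lambda\Phi_0=V_0$, hence by the uniqueness of solutions of \eqref{52} it is the solution with datum $V_0$ and control $W$, and it satisfies $V(T)=0$. Together with the reductions, this proves the theorem.

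The main obstacle will be bookkeeping rather than a genuine difficulty: one must keep the dualities $\HH\leftrightarrow\HH'$, $\GG\leftrightarrow\GG'$ and the adjoints $\AA^*,\BB^*$ consistent throughout, justify the ``hidden regularity'' extension of $\Phi_0\mapsto\BB\Phi$ from $D(\AA)$ to all of $\HH$ using (iii) (and of the coercivity estimate using (iv)), and check carefully that the time reversal in the reduction step really preserves hypotheses (i)--(iv). Once the transposition identity is invoked correctly, the coercivity estimate and the passage back to controllability are routine.
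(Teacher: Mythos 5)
Your proof is correct and is the standard HUM/Lax--Milgram argument; note that the paper itself does not prove Theorem \ref{t51} but recalls it from \cite[Chapter 2]{KomLor2005}, and your construction of the operator $\Lambda\Phi_0:=-V(0)$ via transposition, its coercivity from (iv) and boundedness from (iii), and the reduction of exact to null controllability by reversibility is exactly the scheme the paper carries out concretely in Section \ref{s4}. No gaps beyond the routine bookkeeping you already flag.
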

\noindent We may of course assume that $W$ vanishes outside the interval $(0,T)$.

Under the assumptions of the theorem we may also construct feedback controls yielding arbitrarily fast decay rates:

\begin{theorem}\label{t52}
Assume (i), (ii), (iii), (iv), and fix $\omega >0$ arbitrarily.
There exists a bounded linear map $F:\HH'\to\GG'$ and a constant $M>0$ such that the problem
\begin{equation}\label{53}
V'=-\AA^*V+\BB^*FV,\qquad V(0)=V_0
\end{equation}
has a unique weakly continuous solution $V:\RR\to\HH'$, and the solutions satisfy the estimates
\begin{equation*}
\norm{V(t)}_{\HH'} \le M\norm{V_0}_{\HH'} e^{-\omega t}
\end{equation*}
for all $V_0\in \HH'$ and $t\ge 0$.
\end{theorem}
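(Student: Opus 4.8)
The approach I would take is the \emph{direct method} of \cite{Kom83} (see also \cite[Chapter 2]{KomLor2005}): rather than going through Theorem~\ref{t51}, one constructs the feedback explicitly from a weighted Gramian of the observed free dynamics, and then uses that Gramian itself as a Lyapunov functional realising the prescribed rate. Throughout I identify $\HH'$ with $\HH$ and $\GG'$ with $\GG$ via the Riesz isomorphisms. Since $\AA$ is skew-adjoint, $\AA^*=-\AA$, so the free problem is governed by a unitary group $S(t):=e^{t\AA}=e^{-t\AA^*}$; write $U(t):=S(t)U_0$ for the solution of \eqref{51}. Fixing the rate $\omega>0$, I would define, for $U_0,\widetilde U_0\in D(\AA)$,
\begin{equation*}
(\Lambda_\omega U_0,\widetilde U_0)_\HH:=\int_0^{\infty}e^{-2\omega t}\,(\BB U(t),\BB\widetilde U(t))_\GG\ dt .
\end{equation*}

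The first task is to show that $\Lambda_\omega$ extends to a bounded, self-adjoint, nonnegative and coercive operator on $\HH$. Covering $[0,\infty)$ by the translates $I+nL$ ($L:=\abs{I}$) of the interval $I$ of hypothesis (iii), and using the group property together with the conservation of $\norm{U(t)}_\HH$,
\begin{equation*}
\norm{\BB U}_{L^2(I+s;\GG)}=\norm{\BB S(\cdot)S(s)U_0}_{L^2(I;\GG)}\le c_I\norm{S(s)U_0}_\HH=c_I\norm{U_0}_\HH\qquad(s\in\RR),
\end{equation*}
so the weight $e^{-2\omega t}$ turns the sum over $n$ into a convergent geometric series and yields $(\Lambda_\omega U_0,U_0)_\HH\ll\norm{U_0}_\HH^2$. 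Coercivity is where the observability inequality (iv) enters: since $\norm{U(t)}_\HH$ is constant one may assume, after a time translation, that $I'=(0,\abs{I'})$, whence
\begin{equation*}
(\Lambda_\omega U_0,U_0)_\HH\ge e^{-2\omega\abs{I'}}\int_{I'}\norm{\BB U(t)}_\GG^2\ dt\ge e^{-2\omega\abs{I'}}(c')^{-2}\norm{U_0}_\HH^2 .
\end{equation*}
Hence $\Lambda_\omega$ is an isomorphism of $\HH$; equivalently $Q:=\Lambda_\omega^{-1}$ is bounded, self-adjoint and coercive, so $(QV,V)_\HH\asymp\norm{V}_\HH^2$. (The coercivity constant of $\Lambda_\omega$ decays as $\omega\to\infty$, which is exactly why a faster prescribed rate forces a ``stronger'' feedback; no upper bound on $\omega$ is needed.)

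Next, differentiating $t\mapsto e^{-2\omega t}(\BB S(t)U_0,\BB S(t)\widetilde U_0)_\GG$ and integrating over $(0,\infty)$ — the boundary term at $+\infty$ vanishing thanks to the weight together with the bound $\norm{\BB S(t)U_0}_\GG\le c\norm{\AA S(t)U_0}_\HH=c\norm{\AA U_0}_\HH$ of hypothesis (ii), the one at $0$ producing $-(\BB U_0,\BB\widetilde U_0)_\GG$ — gives the Lyapunov equation
\begin{equation*}
\AA^*\Lambda_\omega+\Lambda_\omega\AA+\BB^*\BB=2\omega\,\Lambda_\omega\qquad\text{on }D(\AA),
\end{equation*}
and, conjugating by $Q$, the equivalent identity $\AA Q+Q\AA^*+Q\BB^*\BB Q=2\omega\,Q$. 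One then takes the feedback $F:=-\BB\,\Lambda_\omega^{-1}=-\BB Q$, so that \eqref{53} reads $V'=-\AA^*V-\BB^*\BB QV$, and uses $\mathcal E(t):=(QV(t),V(t))_\HH$ as a Lyapunov functional. Along solutions, using $\AA^*=-\AA$ and the conjugated identity,
\begin{equation*}
\frac{d}{dt}\mathcal E(t)=-\bigl((\AA Q+Q\AA^*)V,V\bigr)_\HH-2\norm{\BB QV}_\GG^2=-2\omega\,\mathcal E(t)-\norm{\BB QV(t)}_\GG^2\le-2\omega\,\mathcal E(t),
\end{equation*}
hence $\mathcal E(t)\le e^{-2\omega t}\mathcal E(0)$, and the estimate $\norm{V(t)}_\HH\le M\norm{V_0}_\HH e^{-\omega t}$ follows from $\mathcal E\asymp\norm{\cdot}_\HH^2$, with $M$ controlled by the two equivalence constants.

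The step I expect to be the main obstacle is making this rigorous, namely the well-posedness of the closed loop \eqref{53} with $F=-\BB\Lambda_\omega^{-1}$: one must show that the formally defined operator $-\AA^*-\BB^*\BB\Lambda_\omega^{-1}$, on a suitable domain, generates a strongly continuous semigroup on $\HH$ with weakly continuous orbits, and that the differentiation performed above along trajectories is legitimate. The difficulty is that $\BB$, hence $F$, is \emph{unbounded}, so $\BB^*\BB\Lambda_\omega^{-1}$ is not a bounded perturbation of $-\AA^*$ and the generation is not automatic; the remedy is to exploit the admissibility hypotheses (ii)--(iii) in place of boundedness. Concretely, one first checks — by commuting $\Lambda_\omega$ with $S(t)$ and using $\norm{\BB U_0}_\GG\le c\norm{\AA U_0}_\HH$ — that $\Lambda_\omega$ preserves $D(\AA)$ and that $Q=\Lambda_\omega^{-1}$ maps $\HH$ suitably into the domain of $\BB$, so that along any trajectory the control $W(t)=FV(t)$ belongs to $L^2_{\mathrm{loc}}(\RR;\GG)$; one then produces the solution of \eqref{53}, unique in the same transposition sense used to define the solutions of \eqref{52}, by a fixed-point argument in $C(\RR;\HH)$ resting on the admissibility estimate (iii); finally a density argument propagates the decay estimate from $D(\AA)$ to all of $\HH$. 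Once this is established the theorem follows.
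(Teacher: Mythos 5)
The paper does not actually prove Theorem~\ref{t52}: it is recalled as a known abstract result from \cite{Kom83} and \cite[Chapter 2]{KomLor2005} and then simply applied to the Gear--Grimshaw system, so there is no in-paper argument to compare against. Your plan reconstructs precisely the construction of \cite{Kom83} — the exponentially weighted Gramian $\Lambda_\omega$, its boundedness from the admissibility hypothesis (iii) via translates of $I$ and the unitarity of the group, its coercivity from the observability hypothesis (iv), the Lyapunov identity $\AA^*\Lambda_\omega+\Lambda_\omega\AA+\BB^*\BB=2\omega\Lambda_\omega$, the feedback $F=-\BB\Lambda_\omega^{-1}$, and the decay of $(QV,V)_{\HH}$ — and all of that algebra checks out. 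The one step you leave open, namely that $F$ is genuinely a \emph{bounded} map $\HH'\to\GG'$ and that the closed loop \eqref{53} is well posed despite $\BB$ being unbounded (here $\BB$ is pointwise evaluation at $x_0$, only admissible in the sense of (ii)--(iii)), is exactly the technical heart of \cite{Kom83}; you correctly identify it and indicate the right remedy, but your write-up is not self-contained at that point, just as the paper's is not.
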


Let us observe that Theorems \ref{t51} and \ref{t52} have the same assumptions.
These assumptions have been verified during the proof of Theorem \ref{t11}.
Therefore we may also apply Theorem \ref{t52} for the Gear--Grimshaw system, and Theorem \ref{t13} follows.

\section{Use of one control}\label{s6}

In this section we establish a variant of Theorem \ref{t37} when we observe only one of the functions $u(\cdot,x_0)$ and  $v(\cdot,x_0)$.
Such observations do not allow us to determine completely the initial data in \eqref{31}.
Indeed, if $(u,v)$ solves \eqref{31}, then for any constant $c$ the couple $(u,v+c)$ also solves \eqref{31} with some other initial data, so that the observation of the component $u$ may allow to determine $v$ up to an arbitrary additive constant.
An analogous situation occurs by observing $v$.

Theorem \ref{t62} and Corollary \ref{c63} below will show that up to this indeterminacy the determination of the solutions is possible by observing only one component.
Since the solutions of \eqref{31} are given by the formulas
\begin{equation}\label{61}
\begin{cases}
u(t,x)=\sum_{k\in\ZZ}\left(c_k^+e^{i\omega_k^+t}z_{k,1}^++c_k^-e^{i\omega_k^-t}z_{k,1}^-\right)e^{-ikx},\\
v(t,x)=\sum_{k\in\ZZ}\left(c_k^+e^{i\omega_k^+t}z_{k,2}^++c_k^-e^{i\omega_k^-t}z_{k,2}^-\right)e^{-ikx}
\end{cases}
\end{equation}
where we use the notation \eqref{312},
we need an Ingham type theorem for the family $\set{\omega_k^{\pm}\ :\ k\in\ZZ}$.
It does not have a uniform gap, because $\omega_0^+=\omega_0^-=0$ and because $\omega_k^+$ may be close to $\omega_n^-$ for many couples $(k,n)$, but it satisfies the weakened gap condition of Theorem \ref{t23}
with $M=2$.

Given a positive number $\varepsilon$ we consider in the set
\begin{equation*}
\Omega:=\set{\omega_k^{\pm}\ :\ k\in\ZZ}
\end{equation*}
the following equivalence relation: $x\sim y$ if $x=y$, or if there exists a finite sequence $x_1,\ldots, x_n$ in $\Omega$ such that $x=x_1$, $x_n=y$, and
\begin{equation*}
\abs{x_{j+1}-x_j}<\varepsilon\qtq{for} j=1,\ldots, n-1.
\end{equation*}

\begin{lemma}\label{l61}
\mbox{}

\begin{enumerate}[\upshape (i)]
\item We have
\begin{equation*}
\abs{z_{k,j}^{\pm}}\asymp 1,\quad j=1,2.
\end{equation*}
\item For almost every quadruple $(a,c,d,r)\in(0,\infty)^4$, we have
\begin{equation}\label{62}
\omega_k^+\ne\omega_n^+
\qtq{and}
\omega_k^-\ne\omega_n^-
\qtq{whenever}k\ne n,
\end{equation}
and
\begin{equation}\label{63}
\omega_k^+\ne\omega_n^-
\qtq{for all}k,n\in\ZZ,
\qtq{except if}k=n=0.
\end{equation}
\item Assume \eqref{62} and \eqref{63}.
If $\varepsilon$ sufficiently small, then each equivalence class of $\Omega$ has one or two elements.
Moreover, if it has two elements, then one of them belongs to $\set{\omega_k^+\ :\ k\in\ZZ}$, and the other one belongs to $\set{\omega_k^-\ :\ k\in\ZZ}$.
\end{enumerate}
\end{lemma}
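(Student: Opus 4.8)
The plan is to prove the three parts separately, using the asymptotic information already extracted in Lemmas \ref{l32}, \ref{l34}, \ref{l35}, and \ref{l37}. For part (i), I would read off from the explicit formula \eqref{34} that the components of $Z_k^{\pm}$ are bounded, namely $z_{k,1}^{\pm}=2ac$ is constant and $z_{k,2}^{\pm}=1-c-rk^{-2}\pm\sqrt{4acd+[c-1+rk^{-2}]^2}\to 1-c\pm\sqrt{4acd+(c-1)^2}$ as $k\to\pm\infty$, so $\norm{Z_k^{\pm}}\asymp 1$ by Lemma \ref{l32}; it then remains only to check that $z_{k,2}^{\pm}$ stays bounded away from $0$. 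The limit $1-c\pm\sqrt{4acd+(c-1)^2}$ is never zero for positive parameters (for the $+$ sign it is positive since $\sqrt{4acd+(c-1)^2}>\abs{c-1}\ge c-1$, and for the $-$ sign it is negative since $1-c-\sqrt{4acd+(c-1)^2}<1-c-\abs{c-1}\le 0$), so both sequences converge to nonzero limits and only finitely many terms could possibly be small; but in fact one checks directly from the closed form that $z_{k,2}^{\pm}\ne 0$ for every $k$, since $z_{k,2}^+=0$ would force $\sqrt{4acd+[c-1+rk^{-2}]^2}=-(1-c-rk^{-2})$, which squares to $4acd=0$, impossible, and similarly for the $-$ sign. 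Hence $\abs{z_{k,j}^{\pm}}\asymp 1$ for $j=1,2$.

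For part (ii), the genericity of \eqref{62} is exactly Lemma \ref{l37}, so I only need to add the relations \eqref{63}. Suppose $\omega_k^+=\omega_n^-$ with $(k,n)\ne(0,0)$. From \eqref{33} this reads
\begin{equation*}
(c+1)(k^3-n^3)-(k-n)r=-k\sqrt{I_k}-n\sqrt{I_n},
\end{equation*}
where $I_j:=4acdj^4+[r+(c-1)j^2]^2$ as in the proof of Lemma \ref{l37}. Squaring twice, exactly as in that proof, one arrives at a polynomial identity in $r$ whose coefficients are polynomials in $(a,c,d)$ and whose leading coefficient in $r$ does not vanish identically — the point being that the cross term $-k\sqrt{I_k}n\sqrt{I_n}$ now enters with the opposite sign, but the algebra is otherwise the same, and the resulting polynomial is not the zero polynomial (one can see nonvanishing of some coefficient by specializing, e.g. $a,c,d$ to convenient values, or by a degree count in $k,n$). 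Thus for each fixed $(a,c,d)$ and each pair $(k,n)\ne(0,0)$ there are at most finitely many $r$ with $\omega_k^+=\omega_n^-$; taking the countable union over $(k,n)$ and then applying Fubini's theorem exactly as in Lemma \ref{l37} shows that \eqref{63} holds for almost every $(a,c,d,r)$. Intersecting with the full-measure set from Lemma \ref{l37} gives the claim.

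For part (iii), assume \eqref{62} and \eqref{63} hold. I would argue that if a chain of the relation $\sim$ had three elements $x<y<z$ in $\Omega$ with $y-x<\varepsilon$ and $z-y<\varepsilon$, then $z-x<2\varepsilon$; I will show this is impossible once $\varepsilon$ is small enough. By \eqref{62} and \eqref{63} the three exponents must come from at most two of the families $\{\omega_k^+\}$, $\{\omega_k^-\}$ (each family being injective in $k$), so by pigeonhole two of them, say $\omega_k^{\sigma}$ and $\omega_m^{\sigma}$ with $k\ne m$, belong to the same family $\sigma\in\{+,-\}$. By Lemmas \ref{l34} and \ref{l35} the consecutive gaps $\abs{\omega_{k+1}^{\sigma}-\omega_k^{\sigma}}$ tend to $\infty$ when $ad\ne 1$, and when $ad=1$ they tend to $\infty$ for $\sigma=+$ and to $\frac{-r}{c(c+1)}$ for $\sigma=-$; in every case the separation $\inf\{\abs{\omega_k^{\sigma}-\omega_m^{\sigma}}:k\ne m\}$ is a strictly positive number $\gamma_\sigma$ (finitely many small gaps are excluded by \eqref{62}, and the tail is bounded below). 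Choosing $\varepsilon<\tfrac12\min\{\gamma_+,\gamma_-\}$ forces $\abs{\omega_k^{\sigma}-\omega_m^{\sigma}}\ge\gamma_\sigma>2\varepsilon$, contradicting that both lie within distance $2\varepsilon$ of each other inside the chain. Hence no chain has three or more elements, which is the assertion. The main obstacle in this argument is the case $ad=1$, $\sigma=-$, where the family $\{\omega_k^-\}$ does \emph{not} spread out to infinity but instead has asymptotically constant gap $\frac{r}{c(c+1)}$; there one must be a little careful to note that \eqref{62} rules out coincidences and that the \emph{infimum} of the gaps (not merely the limit) is still strictly positive, so that a uniform separation $\gamma_-$ exists and the pigeonhole argument still closes.
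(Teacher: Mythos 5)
Your proposal is correct and follows essentially the same route as the paper: part (i) is read off from the explicit formula \eqref{34}, part (ii) adapts the squaring argument of Lemma \ref{l37} to the mixed coincidence $\omega_k^+=\omega_n^-$ (the cross term changes sign but one lands on the same cubic \eqref{315} in $r$) and concludes via Fubini, and part (iii) rests, as in the paper, on Lemmas \ref{l34} and \ref{l35} together with \eqref{62}--\eqref{63}; your pigeonhole version of (iii), which extracts a uniform positive separation $\gamma_\sigma$ inside each family and takes $\varepsilon<\tfrac12\min\{\gamma_+,\gamma_-\}$, is a mildly streamlined variant of the paper's two-stage shrinking of $\varepsilon$ (first large indices, then the finitely many small ones). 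One point in (ii) needs tightening: the Fubini step requires that for \emph{every} fixed $(a,c,d)\in(0,\infty)^3$ and every pair $(k,n)$ the polynomial in $r$ be nonzero, so ``specializing $a,c,d$ to convenient values'' is not an admissible justification; the correct observation is that the leading coefficient $2c(n-k)(n^3-k^3)$ of \eqref{315} is nonzero for all $c>0$ whenever $k\ne n$, while the remaining case $k=n\ne 0$ is excluded outright for all positive parameters because $\omega_k^+-\omega_k^-=\frac{k}{c}\sqrt{4acdk^4+[(c-1)k^2+r]^2}\ne 0$.
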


\begin{proof}
(i) This  follows from the explicit expression \eqref{34} of these vectors.
\medskip

(ii) In view of Lemma \ref{l39} we only need to consider the property \eqref{63}.
For this we adapt the proof of Lemma \ref{l39} as follows.

If $\omega_k^+=\omega_n^-$ for some $k,n\in\ZZ$, then  we deduce from \eqref{33} the equality
\begin{equation*}
(c+1)(k^3-n^3)-(k-n)r
=-n\sqrt{I_n}-k\sqrt{I_k}
\end{equation*}
instead of \eqref{314}, and then the equality
\begin{equation*}
-nk\sqrt{I_n}\sqrt{I_k}
=nkr^2+\left[2c(n^4+k^4)-(c+1)nk(n^2+k^2)\right]r+\alpha_1
\end{equation*}
instead of \eqref{315}.
Taking its square we arrive at the same equation \eqref{316} as before.
\medskip

(iii) For any fixed $\varepsilon>0$, by Lemmas \ref{l34} and \ref{l35} there exists a sufficiently large positive integer $K$ such that
\begin{equation*}
\abs{\omega_k^+-\omega_n^+}\ge 2\varepsilon
\qtq{and}
\abs{\omega_k^--\omega_n^-}\ge 2\varepsilon
\end{equation*}
whenever $\abs{k}, \abs{n}>K$ and $k\ne n$.
Then each equivalence class in the restricted set
\begin{equation*}
\set{\omega_k^{\pm}\ :\ \abs{k}>K}
\end{equation*}
has at most two elements.
Indeed, if two elements are equivalent, then they have to belong to the different families $\set{\omega_k^+}$ and $\set{\omega_k^-}$, say $\abs{\omega_k^+-\omega_n^-}<\varepsilon$.
Then we infer from our choice of $K$ that
\begin{equation*}
\abs{\omega_k^+-\omega_m^-}>\varepsilon\qtq{for all}m\ne n
\end{equation*}
and
\begin{equation*}
\abs{\omega_m^+-\omega_n^-}>\varepsilon\qtq{for all}m\ne k,
\end{equation*}
so that no other exponent is equivalent to $\omega_k^+$ and $\omega_n^-$.

This property remains valid if we change $\varepsilon$ to a smaller positive value.
Indeed, each one-point equivalence class remains the same, while the others either remain the same or they split into two one-point equivalence classes.

Next we observe that each element of $\Omega$ is isolated.
Therefore, if we diminish $\varepsilon$ so as to satisfy the finite number of inequalities
\begin{align*}
&\dist(\omega_k^+,\Omega\setminus\set{\omega_k^+})>\varepsilon\qtq{for}k=0, \pm 1,\ldots, \pm K,
\intertext{and}
&\dist(\omega_k^-,\Omega\setminus\set{\omega_k^-})>\varepsilon\qtq{for}k=0, \pm 1,\ldots, \pm K,
\end{align*}
then $\set{\omega_k^+}$ and $\set{\omega_k^-}$ will be one-point equivalence classes for each $k=0,\pm 1,\ldots, \pm K$.
(The equality $\omega_0^+=\omega_0^-=0$ does not contradict these properties because $\omega_0^+$ and $\omega_0^-$ are the same element of $\Omega$.)
\end{proof}

Under the conditions of Lemma \ref{l61} we may rewrite the solutions  \eqref{61} of \eqref{31} as follows.
Whenever $\omega_k^+\sim\omega_n^-$ and $(\omega_k^+,\omega_n^-)\ne (0,0)$, we rewrite the corresponding terms
\begin{equation*}
c_k^+e^{i\omega_k^+t}z_{k,j}^+e^{-ikx}+c_n^-e^{i\omega_n^-t}z_{n,j}^-e^{inx}
\end{equation*}
in the form
\begin{equation}\label{64}
a_{k,j}^+(x)z_{k,j}^+e^{i\omega_k^+t}+a_{n,j}^-(x)z_{n,j}^-\frac{e^{i\omega_k^+t}-e^{i\omega_n^-t}}{\omega_k^+-\omega_n^-}
\end{equation}
for $j=1,2$ with suitable new coefficients, and we write
\begin{equation}\label{65}
e_k^+(t):=e^{i\omega_k^+t},
\quad e_n^-(t):=\frac{e^{i\omega_k^+t}-e^{i\omega_n^-t}}{\omega_k^+-\omega_n^-}.
\end{equation}
(By Lemma \ref{l61} (iii) we do not have to consider higher-order divided differences when applying Theorem \ref{t23}.)
For all other exponents (in particular, for $\omega_0^+=\omega_0^-=0$) we set
\begin{equation*}
e_k^{\pm}(t):=e^{i\omega_k^{\pm}t}
\qtq{and}
a_{k,j}^{\pm}(x):=c_k^{\pm}.
\end{equation*}
(These coefficients are in fact independent of $x$.)
We have thus instead of \eqref{61} the following representation:
\begin{equation}\label{66}
\begin{cases}
u(t,x)=\sum_{k\in\ZZ}\left[a_{k,1}^+(x)z_{k,1}^+e_k^+(t)
+a_{k,1}^-(x)z_{k,1}^-e_k^-(t)\right]e^{-ikx},\\
v(t,x)=\sum_{k\in\ZZ}\left[a_{k,2}^+(x)z_{k,2}^+e_k^+(t)
+a_{k,2}^-(x)z_{k,2}^-e_k^-(t)\right]e^{-ikx}.
\end{cases}
\end{equation}

Using this representation we may state our theorem, where we use the notation $\ZZ^*:=\ZZ\setminus\set{0}$:

\begin{theorem}\label{t62}
Assume \eqref{62} and \eqref{63}, and fix $x_0\in\TT$ arbitrarily.
Then the solutions of \eqref{31} have the following properties:

\begin{enumerate}[\upshape (i)]
\item the direct inequalities
\begin{align*}
&\int_I\abs{u(t,x_0)}^2\ dt
\ll \abs{a_{0,1}^++a_{0,1}^-}^2+\sum_{k\in\ZZ^*}\left(\abs{a_{k,1}^+(x_0)}^2+\abs{a_{k,1}^-(x_0)}^2\right)
\intertext{and}
&\int_I\abs{v(t,x_0)}^2\ dt
\ll \abs{a_{0,2}^+-a_{0,2}^-}^2+\sum_{k\in\ZZ^*}\left(\abs{a_{k,2}^+(x_0)}^2+\abs{a_{k,2}^-(x_0)}^2\right)
\end{align*}
hold for all non-degenerate bounded intervals $I$.
\item if $ad\ne 1$, then the inverse inequalities
\begin{equation}\label{67}
\abs{a_{0,1}^++a_{0,1}^-}^2+\sum_{k\in\ZZ^*}\left(\abs{a_{k,1}^+(x_0)}^2+\abs{a_{k,1}^-(x_0)}^2\right)\ll \int_I\abs{u(t,x_0)}^2\ dt
\end{equation}
and
\begin{equation}\label{68}
\abs{a_{0,2}^+-a_{0,2}^-}^2+\sum_{k\in\ZZ^*}\left(\abs{a_{k,2}^+(x_0)}^2+\abs{a_{k,2}^-(x_0)}^2\right)\ll \int_I\abs{v(t,x_0)}^2\ dt
\end{equation}
also hold for all non-degenerate bounded intervals $I$.
\item if $ad=1$, then  \eqref{67} and \eqref{68} hold for all bounded intervals $I$ of length $\abs{I}>2\pi (c+1)/r$, and they fail if $\abs{I}<2\pi (c+1)/r$.
\end{enumerate}
\end{theorem}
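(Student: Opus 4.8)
The plan is to recognize $u(\cdot,x_0)$ and $v(\cdot,x_0)$, via the representation \eqref{61} (equivalently \eqref{66}), as exponential sums whose real frequencies run over the single family
\begin{equation*}
\Omega=\set{\omega_k^{\pm}\ :\ k\in\ZZ},
\end{equation*}
and then to invoke Theorem \ref{t22}. First I would check that $\Omega$ fits the hypotheses of that theorem: by Lemmas \ref{l34} and \ref{l35} the consecutive gaps within each of $\set{\omega_k^+}$ and $\set{\omega_k^-}$ tend to $\pm\infty$, so the only clustering that can occur in $\Omega$ is between one $\omega_k^+$ and one $\omega_n^-$; by Lemma \ref{l61}(iii) there is a choice of $\varepsilon>0$ for which the $\varepsilon$-equivalence classes of $\Omega$ are precisely the chains used to pass from \eqref{61} to \eqref{66}, each having at most two elements. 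In particular $\Omega$ satisfies the weakened gap condition \eqref{22} with $M=2$, and the divided-difference coefficients $b_\lambda$ that appear when one rewrites $u(\cdot,x_0)$ and $v(\cdot,x_0)$ in the form $\sum_\lambda b_\lambda e_\lambda(t)$ of Theorem \ref{t22} are, up to the factors $z_{k,j}^{\pm}e^{ikx_0}$, exactly the coefficients $a_{k,j}^{\pm}(x_0)$ occurring in \eqref{66}.

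Next I would turn the $\ell^2$ norm $\sum_\lambda\abs{b_\lambda}^2$ into the right-hand sides of the asserted inequalities. Lemma \ref{l61}(i) gives $\abs{z_{k,j}^{\pm}}\asymp 1$, and $\abs{e^{ikx_0}}=1$, so for every index $k\in\ZZ^*$ the associated coefficients have modulus $\asymp\abs{a_{k,1}^{\pm}(x_0)}$ in the expansion of $u(\cdot,x_0)$ and $\asymp\abs{a_{k,2}^{\pm}(x_0)}$ in that of $v(\cdot,x_0)$. At $k=0$ the two branches collapse, since $\omega_0^+=\omega_0^-=0$ and $e_0^{\pm}\equiv 1$; by \eqref{35} we have $z_{0,1}^+=z_{0,1}^-=2ac$ and $z_{0,2}^+=-z_{0,2}^-=\sqrt{4acd}$, so the single $b$-coefficient at the frequency $0$ equals $2ac(a_{0,1}^++a_{0,1}^-)$ for $u(\cdot,x_0)$ and $\sqrt{4acd}\,(a_{0,2}^+-a_{0,2}^-)$ for $v(\cdot,x_0)$. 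Hence
\begin{equation*}
\sum_\lambda\abs{b_\lambda}^2\asymp\abs{a_{0,1}^++a_{0,1}^-}^2+\sum_{k\in\ZZ^*}\left(\abs{a_{k,1}^+(x_0)}^2+\abs{a_{k,1}^-(x_0)}^2\right)
\end{equation*}
in the expansion of $u(\cdot,x_0)$, and the analogous identity with $a_{0,2}^+-a_{0,2}^-$ for $v(\cdot,x_0)$. The direct inequalities of (i) now follow at once from Theorem \ref{t22}(ii).

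For (ii) and (iii) I would use that, by the norm identity just established, \eqref{67} and \eqref{68} are exactly the inverse inequality (iii) of Theorem \ref{t22} for the exponential sums $u(\cdot,x_0)$ and $v(\cdot,x_0)$ over $\Omega$; and by \eqref{61}, again because $\abs{z_{k,j}^{\pm}}\asymp 1$, every square summable exponential sum over $\Omega$ arises as some $u(\cdot,x_0)$ and as some $v(\cdot,x_0)$. Hence the critical length for \eqref{67}--\eqref{68} is exactly $2\pi D^+(\Omega)$, the positive part coming from Theorem \ref{t22}(iii) and the sharpness from Mehrenberger's theorem quoted after Theorem \ref{t22}. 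It then remains to compute $D^+(\Omega)$. By subadditivity, $D^+(\Omega)\le D^+(\set{\omega_k^+})+D^+(\set{\omega_k^-})$. By Lemma \ref{l34}, $D^+(\set{\omega_k^+})=0$. If $ad\ne 1$, Lemma \ref{l35} gives $D^+(\set{\omega_k^-})=0$ as well, so $D^+(\Omega)=0$ and \eqref{67}--\eqref{68} hold for every non-degenerate bounded interval. If $ad=1$, Lemma \ref{l35} gives $\omega_k^-=-\frac{r}{c(c+1)}k+O(k^{-1})$, whence $D^+(\set{\omega_k^-})=c(c+1)/r$; since $\set{\omega_k^-}\subset\Omega$ we also have $D^+(\Omega)\ge c(c+1)/r$, so $D^+(\Omega)=c(c+1)/r$, and \eqref{67}--\eqref{68} hold if $\abs{I}>2\pi c(c+1)/r$ and fail if $\abs{I}<2\pi c(c+1)/r$.

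The step I expect to be the main obstacle is making the reduction of the first paragraph rigorous: one has to verify that the $\varepsilon$ produced by Lemma \ref{l61}(iii) is small enough that the $\varepsilon$-chains of $\Omega$ coincide with the pairs $\set{\omega_k^+,\omega_n^-}$ actually grouped together in \eqref{64}--\eqref{66}, so that the coefficients ``$b_\lambda$'' of Theorem \ref{t22} are literally the coefficients occurring in \eqref{66}; and one has to handle separately the degenerate frequency $0=\omega_0^+=\omega_0^-$, which is the reason only the combinations $a_{0,1}^++a_{0,1}^-$ and $a_{0,2}^+-a_{0,2}^-$ are controlled, rather than the four coefficients $a_{0,j}^{\pm}$ individually. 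The remaining ingredients — the density estimate and the surjectivity of $(c_k^{\pm})\mapsto u(\cdot,x_0)$ onto exponential sums over $\Omega$ — are routine once $\abs{z_{k,j}^{\pm}}\asymp 1$ and Lemmas \ref{l34}--\ref{l35} are available, and the conclusion is then a direct application of Theorem \ref{t22} together with Mehrenberger's result.
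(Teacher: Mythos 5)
Your proposal is correct and follows essentially the same route as the paper: verify the weakened gap condition with $M=2$ via Lemmas \ref{l34}, \ref{l35} and \ref{l61}, identify the divided-difference coefficients of Theorem \ref{t22} with the $a_{k,j}^{\pm}(x_0)$ up to factors of modulus $\asymp 1$ (treating $k=0$ separately via \eqref{35}), and compute the upper density of $\set{\omega_k^{\pm}}$ from the densities of the two subfamilies. The paper's proof is just a terser version of the same argument; your added detail on matching the $\varepsilon$-chains with the pairs in \eqref{64}--\eqref{66} and on the sharpness via Mehrenberger's theorem is exactly what the paper leaves implicit.
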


\noindent The notation $\ll$ corresponds to Remark \ref{r22}: in each of the four above estimates the two sides may be considered as real-valued functions of the complex families $(a_{k,1}^+)$, $(a_{k,1}^-)$, $(a_{k,2}^+)$ and $(a_{k,2}^-)$.

\begin{proof}[Proof of Theorem \ref{t62}]
First we deduce from Corollary \ref{c36} and from the subadditivity relations
\begin{equation*}
D^+(\set{\omega_k^-})
\le D^+(\set{\omega_k^{\pm}})
\le D^+(\set{\omega_k^+})+D^+(\set{\omega_k^-})
\end{equation*}
(see \eqref{22}) that
\begin{equation*}
D^+(\set{\omega_k^{\pm}})=
\begin{cases}
0&\text{if $ad\ne 1$,}\\
\frac{c+1}{r}&\text{if $ad=1$.}
\end{cases} 
\end{equation*}
Using this and Lemma \ref{l61}, the theorem follows by  applying Theorem \ref{t23} with $M=2$.
For $k=0$ we also use the relations (see \eqref{35} and \eqref{65})
\begin{equation*}
a_{0,1}^+z_{0,1}^+e_0^+(t)+a_{0,1}^-z_{0,1}^-e_0^-(t)
=2ac(a_{0,1}^++a_{0,1}^-)
\end{equation*}
and
\begin{equation*}
a_{0,2}^+z_{0,2}^+e_0^+(t)+a_{0,2}^-z_{0,2}^-e_0^-(t)
=\sqrt{4acd}(a_{0,2}^+-a_{0,2}^-).\qedhere
\end{equation*}
\end{proof}

We infer from Theorem \ref{t62} the following uniqueness property:

\begin{corollary}\label{c63}
Assume \eqref{62} and \eqref{63}, and
set
\begin{equation*}
T_0:=
\begin{cases}
0&\text{if $ad\ne 1$,}\\
2\pi (c+1)/r&\text{if $ad=1$.}
\end{cases}
\end{equation*}
Fix $x_0\in\TT$ arbitrarily, consider the solution of \eqref{31} and an interval $I$ of length $\abs{I}>T_0$.

\begin{enumerate}[\upshape (i)]
\item If $u(t,x_0)=0$ for all $t\in I$, then  $u=0$ and $v$ is an arbitrary constant function.
\item If $v(t,x_0)=0$ for all $t\in I$, then  $v=0$ and $u$ is an arbitrary constant function.
\end{enumerate}
\end{corollary}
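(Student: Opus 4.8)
The plan is to read off Corollary \ref{c63} from the inverse inequalities \eqref{67}--\eqref{68} of Theorem \ref{t62}, which already carry all the analytic weight; what is left is essentially linear algebra on the coefficients. I describe part (i) in detail; part (ii) is symmetric, with the combination $a_{0,2}^+-a_{0,2}^-$ and the estimate \eqref{68} replacing $a_{0,1}^++a_{0,1}^-$ and \eqref{67}.

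Assume $u(t,x_0)=0$ for every $t\in I$, with $\abs{I}>T_0$. The length condition $\abs{I}>T_0$ covers both the unconditional case $ad\ne 1$ of Theorem \ref{t62}(ii) and the sharp case $ad=1$ of Theorem \ref{t62}(iii), so the inverse inequality \eqref{67} is available; since its right-hand side is $0$, its left-hand side vanishes as well:
\[
a_{0,1}^++a_{0,1}^-=0\qtq{and}a_{k,1}^+(x_0)=a_{k,1}^-(x_0)=0\quad\text{for all }k\in\ZZ^*.
\]
The next step is to convert this into the vanishing of the Fourier coefficients $c_k^{\pm}$ of the solution in \eqref{61}. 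If an exponent $\omega_k^{\pm}$ with $k\ne 0$ lies in a one-point equivalence class, then $a_{k,1}^{\pm}(x_0)=c_k^{\pm}$ by construction, hence $c_k^{\pm}=0$. If $\omega_k^+\sim\omega_n^-$ form a two-point class (so that $k,n\in\ZZ^*$ by \eqref{63}), then substituting $e^{i\omega_n^-t}=e^{i\omega_k^+t}-(\omega_k^+-\omega_n^-)e_n^-(t)$ in the pair of terms and comparing with \eqref{64} exhibits the map $(c_k^+,c_n^-)\mapsto(a_{k,1}^+(x_0),a_{n,1}^-(x_0))$ as upper triangular with diagonal entries $e^{ikx_0}$ and $-e^{inx_0}(\omega_k^+-\omega_n^-)$, both non-zero (the second by \eqref{63}); hence again $c_k^+=c_n^-=0$. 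Running over all classes gives $c_k^{\pm}=0$ for every $k\in\ZZ^*$, while $a_{0,1}^{\pm}=c_0^{\pm}$ combined with the first displayed relation gives $c_0^++c_0^-=0$.

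Feeding this back into \eqref{61} and using $Z_0^{\pm}=(2ac,\pm\sqrt{4acd})$ from \eqref{35}, only the $k=0$ mode survives, so that $u(t,x)\equiv 2ac(c_0^++c_0^-)=0$ and $v(t,x)\equiv\sqrt{4acd}(c_0^+-c_0^-)$. Thus $u=0$ and $v$ is constant; moreover the single constraint $c_0^++c_0^-=0$ leaves $c_0^+-c_0^-$ free, so this constant is arbitrary, and conversely $(0,\kappa)$ plainly solves \eqref{31} with $u(\cdot,x_0)\equiv 0$ for every constant $\kappa$. This is exactly assertion (i), and (ii) follows along the same lines with $+$ and $-$ interchanged in the surviving $k=0$ term.

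I do not expect a genuine obstacle: the difficulty resides entirely in Theorem \ref{t62}. The one point requiring care is the $k=0$ mode, where $\omega_0^+=\omega_0^-=0$ and $z_{0,1}^+=z_{0,1}^-=2ac$, so that $u(\cdot,x_0)$ detects only the combination $c_0^++c_0^-$; recognising that the orthogonal combination $c_0^+-c_0^-$ is genuinely unconstrained is what produces the arbitrary additive constant in $v$, matching the indeterminacy noted before Theorem \ref{t62}.
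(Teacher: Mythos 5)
Your proposal is correct and follows essentially the same route as the paper: apply the inverse inequality of Theorem \ref{t62} to conclude that $a_{0,1}^++a_{0,1}^-=0$ and $a_{k,1}^{\pm}(x_0)=0$ for $k\in\ZZ^*$, translate this back into $c_0^++c_0^-=0$ and $c_k^{\pm}=0$ via \eqref{64}, and read off $u=0$, $v=\sqrt{4acd}\,(c_0^+-c_0^-)$ from \eqref{35}. The only difference is that you spell out the triangular (hence invertible) change of coefficients $(c_k^+,c_n^-)\mapsto(a_{k,1}^+(x_0),a_{n,1}^-(x_0))$ for two-point equivalence classes, a step the paper states without proof; this is a welcome clarification rather than a deviation.
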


\begin{proof}
If $u(t,x_0)=0$ for all $t\in I$, then we infer from the estimate of Theorem \ref{t62}
the equalities
\begin{equation*}
a_{0,1}^++a_{0,1}^-=0,
\qtq{and}
a_{k,1}^{\pm}=0
\qtq{for all}
k\in\ZZ^*.
\end{equation*}
In view of \eqref{64} this is equivalent to the relations
\begin{equation*}
c_0^++c_0^-=0,
\qtq{and}
c_k^{\pm}=0
\qtq{for all}
k\in\ZZ^*.
\end{equation*}
Using \eqref{35} we conclude that
\begin{equation*}
u(t,x)=0
\end{equation*}
and
\begin{equation*}
v(t,x)=\sqrt{4acd}(c_0^+-c_0^-),
\end{equation*}
i.e., $u=0$ and $v$ is an arbitrary constant function.

Similarly, if $u(t,x_0)=0$ for all $t\in I$, then we obtain that
\begin{equation*}
c_0^+-c_0^-=0,
\qtq{and}
c_k^{\pm}=0
\qtq{for all}
k\in\ZZ^*.
\end{equation*}
This implies that $v=0$ and $u$ is an arbitrary constant function.
\end{proof}

We end this paper by proving two variants of Theorem \ref{t41} where we apply only one control.
Let us observe that if $f=0$ in \eqref{41}, then $\int_{\TT}u(t,x)\ dx$ does not depend on $t\in\RR$ because
\begin{equation*}
\frac{d}{dt}\int_{\TT}u\ dx
=-\int_{\TT}u_{xxx}+av_{xxx}\ dx
=0
\end{equation*}
by integration by parts.
It follows that by using only the control function $g$  in \eqref{41}, if a state $(u_0,v_0)\in H$ may be driven to $(u_T,v_T)\in H$ in time $T$, then
\begin{equation}\label{69}
\int_{\TT}u_0\ dx=\int_{\TT}u_T\ dx.
\end{equation}

Similarly, if $g=0$ in \eqref{41}, then $\int_{\TT}v(t,x)\ dx$ does not depend on $t\in\RR$ because
\begin{equation*}
\frac{d}{dt}\int_{\TT}v\ dx
=-\int_{\TT}\frac{r}{c}v_x+\frac{1}{c}v_{xxx}+\frac{d}{c}u_{xxx}\ dx
=0.
\end{equation*}
It follows that by using only the control function $f$  in \eqref{41}, if a state $(u_0,v_0)\in H$ may be driven to $(u_T,v_T)\in H$ in time $T$, then
\begin{equation}\label{610}
\int_{\TT}v_0\ dx=\int_{\TT}v_T\ dx.
\end{equation}

Applying the method of ``contr\^olabilit\'e exacte \'elargie'' of Lions \cite[p. 95]{Lions1988}, we prove that these conditions are also sufficient for the controllability if the time is large enough.

\begin{theorem}\label{t64}
Fix $x_0\in\TT$ arbitrarily, and choose $a,c,d,r$ satisfying \eqref{310}, and set
\begin{equation*}
T_0:=
\begin{cases}
0&\text{if $ad\ne 1$,}\\
2\pi (c+1)/r&\text{if $ad=1$.}
\end{cases}
\end{equation*}
Furthermore, fix $T>T_0$ and $(u_0,v_0), (u_T,v_T)\in H$ arbitrarily.

\begin{enumerate}[\upshape (i)]
\item There exists a control function $f\in L^2_{\text{loc}}(\RR)$ such that the solution of \eqref{41} with $g=0$ satisfies the final conditions
\begin{equation*}
u(T)=u_T\qtq{and} v(T)=v_T
\end{equation*}
if and only if \eqref{610} is satisfied.
\item There exists a control function $g\in L^2_{\text{loc}}(\RR)$ such that the solution of \eqref{41} with $f=0$ satisfies the final conditions
\begin{equation*}
u(T)=u_T\qtq{and} v(T)=v_T
\end{equation*}
if and only if \eqref{69} is satisfied.
\end{enumerate}
\end{theorem}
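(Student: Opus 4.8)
The necessity of \eqref{69} and \eqref{610} having just been established, it remains to prove sufficiency, and the plan is to apply the Hilbert Uniqueness Method in the ``enlarged'' form of Lions: observing only one component at $x_0$ produces, through Theorem~\ref{t62} and Corollary~\ref{c63}, an observability inequality with a one-dimensional defect, and the compatibility condition is precisely what annihilates the associated linear obstruction. I describe part~(ii) ($f\equiv0$, only $g$ acts, compatibility \eqref{69}); part~(i) is obtained by the same argument, now observing the first adjoint component $\varphi(\cdot,x_0)$ and invoking \eqref{610} and Corollary~\ref{c63}(i).

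First I would reduce the statement to a property of the reachable set. Let $S(t)$ denote the free flow of \eqref{41} (both controls zero); as observed before the theorem, $t\mapsto\int_{\TT}[S(t)w]_1\,dx$ is constant. Replacing $(u_T,v_T)$ by $(u_T,v_T)-S(T)(u_0,v_0)$, linearity reduces the whole problem to steering $(0,0)$ in time $T$ to an \emph{arbitrary} target $(p,q)\in H$, and \eqref{69} is exactly what forces this reduced target to satisfy $\int_{\TT}p\,dx=0$. Hence it suffices to show that
\begin{equation*}
\mathcal{R}_T:=\bigl\{(u(T),v(T))\ :\ g\in L^2(0,T),\ (u,v)\text{ solves }\eqref{41}\text{ from }(0,0)\text{ with }f\equiv0\bigr\}
\end{equation*}
contains the hyperplane $E:=\set{(p,q)\in H\ :\ \int_{\TT}p\,dx=0}$; combined with the reverse inclusion $\mathcal{R}_T\subset E$ (the necessity just proved) this yields $\mathcal{R}_T=E$.

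To obtain $E\subset\mathcal{R}_T$ I would run HUM on a quotient. Repeating the integration by parts of Section~\ref{s4} with $f\equiv0$, the solution of \eqref{41} from $(0,0)$ and any solution $(\varphi,\psi)$ of the adjoint system \eqref{42} satisfy
\begin{equation*}
((u(T),v(T)),(\varphi(T),\psi(T)))_H=\int_0^Tg(t)\,\psi(t,x_0)\,dt ,
\end{equation*}
and the well posedness of this one-control problem follows from Theorem~\ref{t42} and the direct inequality of Theorem~\ref{t62}. Moreover \eqref{42} is nothing but \eqref{31} with $(u,v,a,c,d,r)$ renamed $(\varphi,\psi,d/c,c,ac,r)$; since $\tfrac dc\cdot ac=ad$ the eigenvalues $\omega_k^\pm$ and the threshold $T_0$ are unchanged, so \eqref{62} and \eqref{63} hold for \eqref{42} for a.e.\ $(a,c,d,r)$ and Theorem~\ref{t62} and Corollary~\ref{c63} apply to it. In particular the observation map $(\varphi(T),\psi(T))\mapsto\psi(\cdot,x_0)|_{(0,T)}$ has kernel $N:=\set{(\lambda,0)\ :\ \lambda\in\CC}$, and for $T>T_0$ the inverse inequality of Theorem~\ref{t62} gives $\int_0^T|\psi(t,x_0)|^2\,dt\gg\dist((\varphi(T),\psi(T)),N)^2$ (using $\abs{z_{k,j}^\pm}\asymp1$ from Lemma~\ref{l61}(i), and, when $ad=1$, that the passage between exponential and divided-difference coefficients preserves the $\ell^2$ norm since the gaps $\omega_{k+1}^--\omega_k^-$ stay bounded away from $0$ and $\infty$). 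The decisive point is that for $(p,q)\in E$ the functional $(\varphi(T),\psi(T))\mapsto((p,q),(\varphi(T),\psi(T)))_H$ vanishes on $N$, because $((p,q),(\lambda,0))_H=\overline\lambda\int_{\TT}p\,dx=0$; it therefore descends to a bounded linear functional on $H/N$, and
\begin{equation*}
J(\varphi_0,\psi_0):=\tfrac12\int_0^T|\psi(t,x_0)|^2\,dt-((p,q),(\varphi(T),\psi(T)))_H
\end{equation*}
is strictly convex on $H/N$ and, by the quotient observability inequality, coercive there. Its minimizer $(\widehat\varphi_0,\widehat\psi_0)$, unique modulo $N$, satisfies $\int_0^T\widehat\psi(t,x_0)\,\psi(t,x_0)\,dt=((p,q),(\varphi(T),\psi(T)))_H$ for all adjoint solutions, so by the displayed duality identity the control $g:=\widehat\psi(\cdot,x_0)$ (extended by $0$ outside $(0,T)$) drives $(0,0)$ to $(p,q)$; the uniqueness part of Theorem~\ref{t42} identifies the resulting trajectory with the solution of \eqref{41}.

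The step I expect to be most delicate is the verification that the defect is \emph{exactly} $N$ and nothing larger, that is, that \eqref{69} (resp.\ \eqref{610}) is the only conserved quantity obstructing controllability. This relies on Corollary~\ref{c63}, but must be combined with the orthogonality of Lemma~\ref{l32}, the bounds $\abs{z_{k,j}^\pm}\asymp1$ of Lemma~\ref{l61}(i) and the explicit $k=0$ eigenvectors \eqref{35}, in order to pass from ``the sign-symmetric Fourier coefficient at $k=0$ is the only datum missed by $\psi(\cdot,x_0)$'' to ``$J$ is coercive on $H/N$''; in the borderline case $ad=1$ this further requires the norm comparison between exponential and divided-difference coefficients mentioned above. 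The remaining points --- transporting Theorem~\ref{t62} and Corollary~\ref{c63} to \eqref{42}, and checking coercivity only for $T>T_0$ --- are routine.
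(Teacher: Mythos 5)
Your argument is correct and follows essentially the same route as the paper: after establishing necessity, both proofs run the Hilbert Uniqueness Method with a single control, using the one-component observability of Theorem \ref{t62} on the codimension-one subspace determined by the conserved mean --- the paper phrases this as Lax--Milgram for the modified operator $\Lambda$ on the subspace $\tilde H$, you as minimization of the HUM functional on the quotient $H/N$, which are two standard equivalent formulations of Lions' ``contr\^olabilit\'e exacte \'elargie''. The extra details you supply (the invariance of the spectrum under passage to the adjoint system \eqref{42}, the explicit identification of the unobservable kernel $N$ via Corollary \ref{c63}) are precisely the points the paper leaves implicit.
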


\begin{proof}
The two cases being analogous, we only consider (i).
We have already shown the necessity of the condition \eqref{610}.
It remains to prove the null controllability of all initial data $(u_0,v_0)$ belonging to the closed linear subspace
\begin{equation*}
\tilde H:=\set{(u_0,v_0)\in H\ :\ \int_{\TT}u_0\ dx=0}.
\end{equation*}
For this we repeat the proof of Theorem \ref{t41} by modifying the definition of the operator $\Lambda$: in \eqref{43} and \eqref{44} we change $g=-\psi(\cdot,x_0)$ to $0$.
We obtain a continuous linear map $\Lambda:\tilde H\to\tilde H$, and the hypotheses of the Lax--Milgram theorem are satisfied by Theorem \ref{t62}.
\end{proof}

\begin{remark}\label{r65}
It follows from Theorem \ref{t62} that the value $T_0$ is optimal for the validity of Corollary \ref{c63} and Theorem \ref{t64}.
\end{remark}

\noindent\textbf{Acknowledgments:} The authors wish to thank the referee for his/her valuable comments which improved this paper. R. A. Capistrano-Filho was partially supported by CNPq (Brazil) by the grants 306475/2017-0, 408181/2018-4  and Propesq (UFPE)- Edital Qualis A, V. Komornik was supported by ``Le Réseau Franco-Brésilien en Mathématiques'' and by the grant NSFC No.11871348, and A. F. Pazoto was partially supported by CNPq (Brazil).

\end{document}